\numberwithin{figure}{section}
\numberwithin{equation}{section}
\declaretheoremstyle[bodyfont=\normalfont,spaceabove=\medskipamount,
    spacebelow=\medskipamount]{definition}
\theoremstyle{definition}
\newtheorem{theorem}{Theorem}[section]
\newtheorem{lemma}[theorem]{Lemma}
\newtheorem{corollary}[theorem]{Corollary}
\newtheorem{proposition}[theorem]{Proposition}
\newtheorem{definition}[theorem]{Definition}
\newtheorem{remark}[theorem]{Remark}
\newtheorem{example}[theorem]{Example}
\DeclareMathOperator\Hom{Hom} 
\DeclareMathOperator\Ad{Ad} 
\title{\large \textbf{FRAMED KNOTOIDS AND THEIR QUANTUM INVARIANTS}}
\author[$\dagger$]{\normalsize WOUT MOLTMAKER}
\affil[$\dagger$]{\textit{Mathematical Institute, University of Oxford}}
\date{}
\begin{document}

\maketitle

\begin{abstract}
    We modify the definition of spherical knotoids to include a framing, in analogy to framed knots, and define a further modification that includes a secondary `coframing' to obtain `biframed' knotoids. We exhibit topological spaces whose ambient isotopy classes are in one-to-one correspondence with framed and biframed knotoids respectively. We then show how framed and biframed knotoids allow us to generalize quantum knot invariants to a knotoid setting, leading to the construction of general Reshetikhin-Turaev type biframed knotoid invariants.
\end{abstract}



\section{Introduction and Motivation}

Knotoids were introduced by V. Turaev in \cite{turaev2012knotoids} as a generalization of knots. Intuitively, knotoids are knot diagrams that have open ends; see Figure \ref{fig:planar_knotoids} below. Unlike long knots \cite[~Ch.~1]{chmutov2012introduction}, these end-points are allowed to lie in the interior of the diagram. To prevent all knotoid diagrams from being trivial, the diagrammatic moves in Figure \ref{fig:forbiddenmoves} are explicitly forbidden.
\begin{figure}[ht]
    \centering
    \includegraphics[width=.7\linewidth]{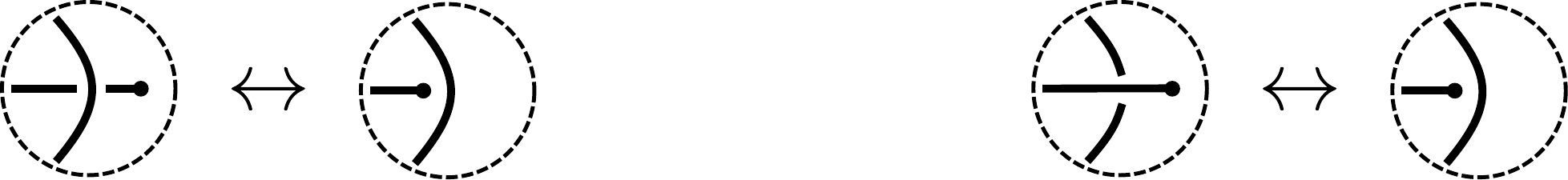}
    \caption{The forbidden knotoid diagram moves.}
    \label{fig:forbiddenmoves}
\end{figure}
Turaev showed that such diagrams are in correspondence with so-called `simple theta-curves', in the same way that knot diagrams correspond to knots. In this sense we say that simple theta-curves are the `geometric realization' of knotoid diagrams.

Knotoids are a natural object to use when studying the knottedness of open-knotted protein chains. This is done in e.g.~\cite{goundaroulis2020knotoids} where the knottedness of a protein is analysed by projecting it onto planes in many possible directions and identifying the resulting knotoids. The result is a map from a discretization of $S^2$ to the set of all knotoids, which encodes much information about the shape of the protein.

To make this application of knotoids viable for more complex proteins, it is helpful to have a classification of knotoids. The classification of knotoids on the sphere with up to 6 crossings is complete \cite{goundaroulis2019systematic}, and makes use of strong knotoid invariants such as the arrow polynomial from \cite{gugumcu2017new} and the double branched cover from \cite{barbensi2018double}. For further progress a larger selection of knotoid invariants is needed. Extending this selection is the primary goal of of this paper.

In this paper the notion of a knotoid diagram is generalized to include a \textit{framing}, in analogy to framed knots. It is generally easier to construct invariants of \textit{framed} knots since framed knot diagrams have fewer allowed Reidemeister moves than knot diagrams. This same principle motivates the introduction of framed knotoids to facilitate the production of knotoid invariants. Along with definitions of several notions of framed knotoid diagrams we will also give geometric realizations of these types of diagrams. This helps to keep the theory grounded in topology, before allowing algebraic and diagrammatic manipulations to take over when we move on to constructing framed knotoid invariants.

This construction of framed knotoid invariants leads to the construction of general Reshetikhin-Turaev type quantum invariants of knotoids. This is related to the very recent work by G\"{u}g\"{u}mc\"{u} and Kauffman \cite{gugumcu2021quantum} on quantum knotoid invariants. Our work in this paper is independent from theirs, but there are similarities between our approach and theirs: e.g.~what we shall call our `coframing' is analogous to their `rotation number'. Our approach will add some topological motivation to the ideas developed there, and as a result has a focus on spherical knotoids rather than planar ones.\\

\section{Definitions and Geometric Realizations}\label{sec:topology}

In this section we briefly recall the basic results on knotoids and simple theta-curves, both due to Turaev \cite{turaev2012knotoids}. Afterwards we introduce framed and biframed versions of both knotoids and simple theta-curves.

\subsection{Knotoids and Simple Theta-curves}

\begin{definition}\cite{turaev2012knotoids}
Let $\Sigma$ be a surface. A \textbf{knotoid diagram} in $\Sigma$ is a smooth immersion $\phi:[0,1]\hookrightarrow \Sigma^\circ$ into the interior of $\Sigma$, whose only singularities are transversal double points endowed with over/undercrossing data. Knotoid diagrams have a natural orientation, namely from the \textbf{leg} $\phi(0)$ to the \textbf{head} $\phi(1)$. We will not always label the leg and head or depict knotoids as oriented, but this orientation is always implicit. Two knotoid diagrams are \textbf{equivalent} if they can be related by a sequence of isotopies of $\Sigma$ and applications of the \textit{Reidemeister moves} $R1,R2,R3$ familiar for knot diagrams. \textbf{Knotoids} are the equivalence classes of knotoid diagrams.

Applications of the Reidemeister moves are not allowed to involve the end-points, and indeed the diagrammatic moves in Figure \ref{fig:forbiddenmoves} are expressly forbidden.
\end{definition}

\begin{figure}[ht]
    \centering
    \includegraphics[width=.95\linewidth]{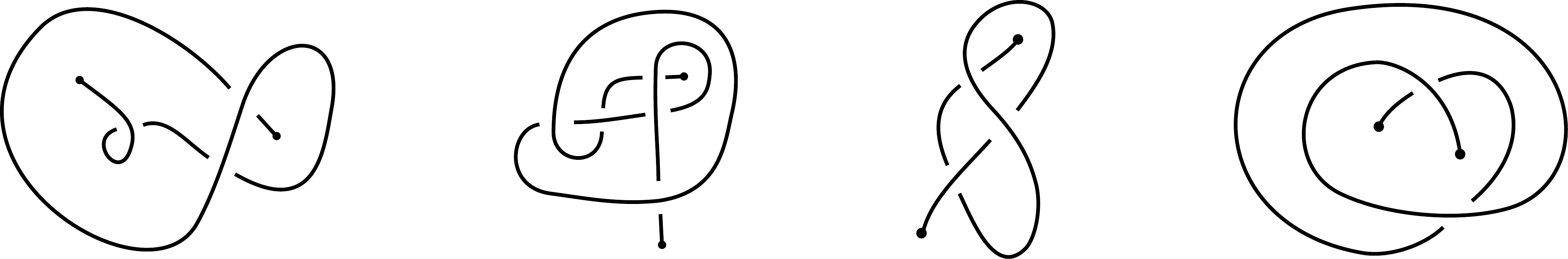}
    \caption{Examples of knotoid diagrams on $\Sigma=\mathbb{R}^2$.}
    \label{fig:planar_knotoids}
\end{figure}

The following is immediate from noting that $S^2$ is isomorphic to the one-point compactification of $\mathbb{R}^2$.

\begin{remark}\label{rk:spherical}
Knotoids in $\Sigma=S^2$ are equivalent to knotoids in $\Sigma=\mathbb{R}^2$ modulo the \textbf{spherical move} $R4$ depicted in Figure \ref{fig:sphericalmove}.
\begin{figure}[ht]
    \centering
    \includegraphics[width=.35\linewidth]{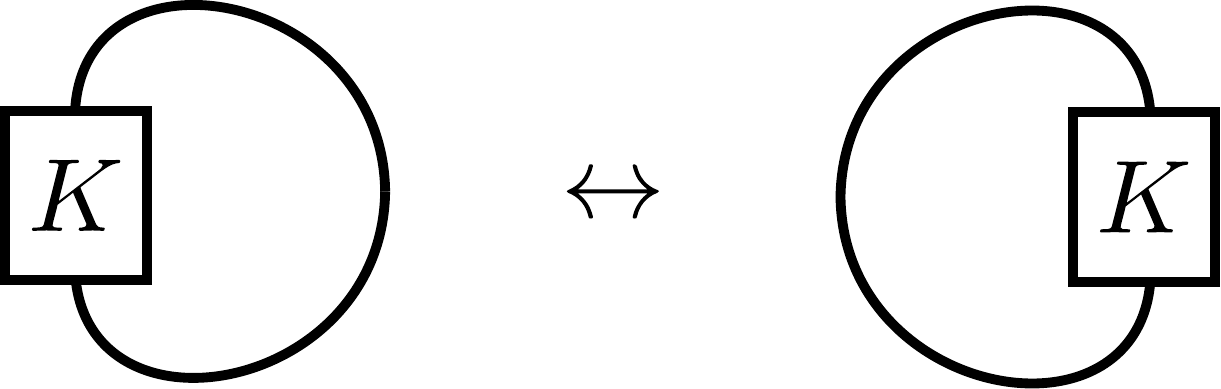}
    \caption{The spherical move on a knotoid diagram, where $K$ denotes the rest of a fixed knotoid diagram.}
    \label{fig:sphericalmove}
\end{figure}
Note that while this move needs to be imposed on knotoid diagrams in $\mathbb{R}^2$ and may change the equivalence class of a knotoid diagram in $\mathbb{R}^2$, it is a surface isotopy for knotoids in $S^2$ given by sweeping a strand external to the diagram along the back of $S^2$. Nevertheless, it will be helpful to distinguish this move from other isotopies, so that we are free to think of knotoids in $S^2$ as lying in $\mathbb{R}^2$.
\end{remark}

We call a knotoid in $S^2$ \textbf{spherical}. In this paper all knotoids are assumed to be spherical unless stated otherwise.

\begin{remark}
In the remainder of this section we will consider several kinds of embeddings of topological spaces in $S^3$, related to `theta-curves' defined below. All of these topological spaces will be locally homeomorphic to $\mathbb{R}$, $\mathbb{R}^2$, or $\mathbb{R}\times[0,\infty)$ except at two points or arcs. All the embeddings that we will consider in what follows will be implicitly assumed to be smooth everywhere except for these points or arcs, where smoothness is not defined.
\end{remark}

\begin{definition}\label{def:thetacurve}
\cite{turaev2012knotoids}
A \textbf{theta-curve} $\theta$ is an embedding into $S^3$ of the graph $\Theta$, which consists of two vertices $v_0,v_1$ and three edges $e_-,e_0,e_+$ joining them. We consider such embeddings up to ambient isotopies of $S^3$ that preserve the labels of the vertices and edges. If $\theta$ is a theta-curve, then omitting any edge from $\Theta$ results in an embedding $S^1\hookrightarrow S^3$, i.e.~a knot. A theta-curve $\theta$ is said to be \textbf{simple} if the image of $e_-\cup e_+$ is the unknot.
\end{definition}

The reason for introducing simple theta-curves is the following geometric realization result, also due to Turaev \cite{turaev2012knotoids}:

\begin{theorem}\label{thm:knotoidbijection}
There is a bijection between spherical knotoids and label-preserving ambient isotopy classes of simple theta-curves. We say that $\Theta$ is therefore the \textbf{geometric realization} of knotoids.
\end{theorem}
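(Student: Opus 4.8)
The plan is to exhibit the bijection explicitly by constructing mutually inverse maps between knotoid diagrams (modulo equivalence) and simple theta-curves (modulo label-preserving ambient isotopy), following a ``thicken and cap off'' procedure. First I would fix a standard embedding of $S^2$ as an equatorial sphere in $S^3$, splitting $S^3$ into two closed balls $B_+$ and $B_-$. Given a spherical knotoid diagram $\phi:[0,1]\to S^2$, resolve each double point by pushing the over-strand slightly into $B_+$ and the under-strand slightly into $B_-$; this turns the immersed arc into a smoothly embedded arc $e_0\subset S^3$ running from $v_0:=\phi(0)$ to $v_1:=\phi(1)$. Then attach two further arcs joining $v_0$ to $v_1$: an arc $e_+$ running through $B_+$ via the north pole and an arc $e_-$ running through $B_-$ via the south pole, chosen so that $e_-\cup e_+$ is the standard unknotted circle through the two poles. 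The union $e_-\cup e_0\cup e_+$ is then an embedded copy of $\Theta$, and since $e_-\cup e_+$ is unknotted by construction it is a simple theta-curve; call it $\Phi(\phi)$.

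The first thing to verify is that $\Phi$ descends to equivalence classes. I would check that an ambient isotopy of $S^2$ and each of the Reidemeister moves $R1,R2,R3$ on the diagram is realized by an ambient isotopy of $S^3$ supported near $e_0$ and fixing the vertices and the edges $e_\pm$, and that the spherical move $R4$ is realized by the isotopy sweeping a strand of $e_0$ around the back of $S^2$ through the complementary ball. Conversely, the reason the forbidden moves must be excluded is exactly that they would require pulling a strand of $e_0$ across the edge $e_+$ or $e_-$ at a vertex; because the vertices are genuine trivalent vertices of $\Theta$, no such move is an ambient isotopy of the embedded graph. This is the conceptual heart of why $\Theta$ is the correct realization.

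For the inverse I would argue as follows. Given a simple theta-curve $\theta$, the hypothesis that $e_-\cup e_+$ is the unknot lets me choose an ambient isotopy of $S^3$ carrying $e_-\cup e_+$ to the standard circle through the poles with $v_0,v_1$ on the equatorial $S^2$ and $e_\pm\subset B_\pm$. A generic projection of the remaining arc $e_0$ onto $S^2$ is then an immersed arc with transversal double points carrying over/under data from the $B_\pm$-coordinate, i.e.\ a knotoid diagram; define $\Psi(\theta)$ to be its class. That $\Psi\circ\Phi$ and $\Phi\circ\Psi$ are the identity should then follow because projecting the arc produced by the resolution recovers the original diagram up to isotopy, and resolving the projected diagram recovers $\theta$ up to the normalizing isotopy.

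The main obstacle is the well-definedness of $\Psi$: the normalizing isotopy carrying $e_-\cup e_+$ to standard position is far from unique, and I must show that two choices yield knotoid diagrams differing only by isotopy, $R1$--$R3$, and the spherical move $R4$. The ambiguity is governed by the group of self-homeomorphisms of $S^3$ fixing the standard unknot $e_-\cup e_+$ (with its labelled decomposition into $e_-,e_+$ and marked points $v_0,v_1$); the essential point is that the complement of the standard unknot is a solid torus, so the remaining freedom amounts to ``spinning'' $e_0$ around the unknot and sliding it past the poles, and I would show that each such generator acts on the projected diagram by a spherical move or by a sequence of Reidemeister moves. Pinning down this action --- effectively a light-bulb / isotopy-extension argument carried out in the solid-torus complement --- is where the real work lies; the constructions of $\Phi$ and $\Psi$ themselves are routine once the standard position is fixed.
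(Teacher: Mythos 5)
Your construction is essentially the one the paper uses (following Turaev): resolve the crossings of a diagram into an embedded arc near the equatorial sphere and cap off with unknotted arcs $e_\pm$ through the two poles, and invert by putting a simple theta-curve into standard position and projecting $e_0$. The well-definedness issue you correctly isolate as the real work --- that different normalizing isotopies yield equivalent diagrams --- is exactly what Turaev's lemma that ambient isotopic \emph{standard} simple theta-curves are isotopic within the class of standard ones settles, and the paper defers this step to \cite{turaev2012knotoids} rather than reproving it.
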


For a detailed proof, see \cite[~Sec.~6]{turaev2012knotoids}. For us the important parts of the proof are the explicit correspondence, and the notion of a \textit{standard} theta-curve. Intuitively a theta-curve is standard if $e_+\cup e_-$ is manifestly the unknot:

\begin{definition}
A simple theta-curve $\theta$ is \textbf{standard} if the vertices of $\theta$ lie in $\mathbb{R}^2\times\{0\}\subseteq S^3\cong \mathbb{R}^3\cup\{\infty\}$, $\theta(e_+),\theta(e_-)$ lie in the upper- and lower half-plane of $\mathbb{R}^2\times \{0\}$ respectively, and $\theta(e_+),\theta(e_-)$ project bijectively to the same arc $a\subseteq\mathbb{R}^2\times\{0\}$ that connects the vertices of $\theta$.
\end{definition}

Turaev shows that any simple theta-curve is ambient isotopic to a standard one, and that if $\theta,\theta'$ are ambient isotopic standard simple theta-curves then they are ambient isotopic within the class of standard simple theta-curves \cite{turaev2012knotoids}. For standard simple theta-curves the bijection of Theorem \ref{thm:knotoidbijection} is depicted in Figure \ref{fig:bijection}.

\begin{figure}[ht]
    \centering
    \includegraphics[width=.7\linewidth]{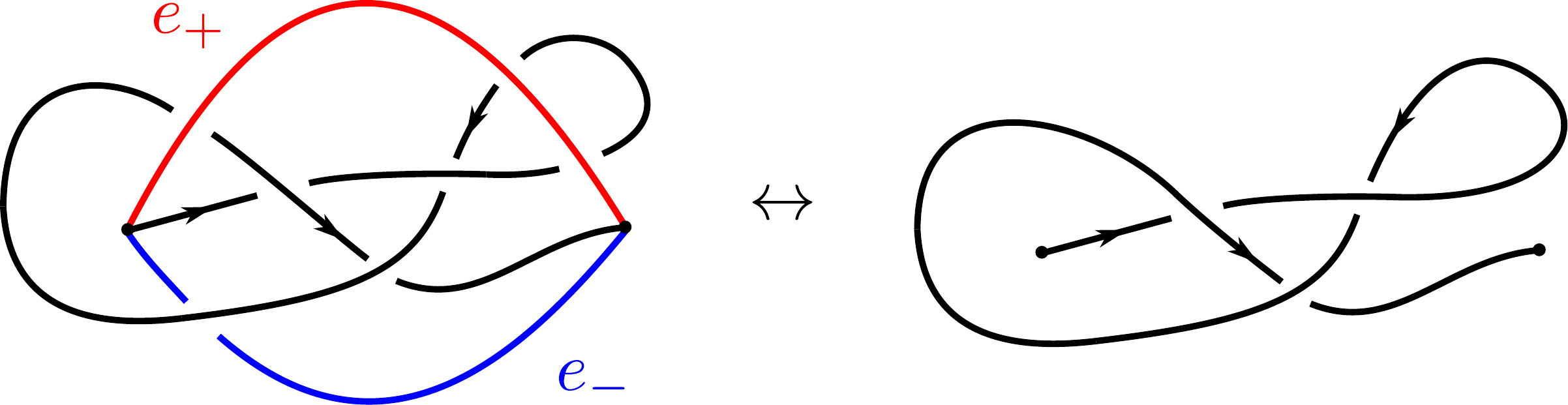}
    \caption{The bijection of Theorem \ref{thm:knotoidbijection}. A standard theta-curve produces a knotoid by moving $\theta(e_0)$ onto a neighborhood of $\mathbb{R}^2\times\{0\}$ and taking a vertical planar projection. A knotoid conversely produces a theta-curve by considering a knotoid diagram as a knotted arc lying in a neighbourhood of $\mathbb{R}^2\times \{0\}$ and tying arcs $e_\pm$ to its end-points.}
    \label{fig:bijection}
\end{figure}

\subsection{Framed and Biframed Knotoids}\label{subsec:framedandbiframed}

The aim of this section is to give reasonable definitions of what it means for a knotoid to be \textit{framed}. First recall the main results on framed knots from \cite{elhamdadi2020framed}: A framed knot $K$ is a knot with a transversal, smooth, everywhere nonzero vector field. The framing of $K$ is then defined to be the associated element in $\pi_1(SO(2))\cong\mathbb{Z}$. There is an equivalence between framed knots and tangled \textit{ribbons} in $S^3$, so framed knots can equivalently be characterized as embeddings of the annulus into $S^3$. 

Any knot diagram induces a \textit{blackboard framing} on the corresponding knot, with framing integer given by the \textit{writhe} of the diagram. Letting $R1'$ denote the \textit{weakened} first Reidemeister move (see Figure \ref{fig:weakreid}), framed knots up to equivalence are in bijection with knot diagrams up to $R1',R2,R3$. Such knot diagrams are referred to as \textit{framed} knot diagrams. Details can be found in \cite{elhamdadi2020framed}.
\begin{figure}[ht]
    \centering
    \includegraphics[width=.3\linewidth]{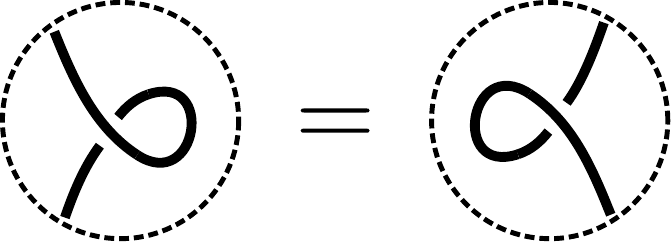}
    \caption{The weakened first Reidemeister move, $R1'$.}
    \label{fig:weakreid}
\end{figure}

\subsubsection{Framed Knotoids}\label{subsec:framedknotoids}

Given that framed knot diagrams are simply knot diagrams up to the moves $R1',R2,R3$, the definition of framed knotoids is natural:

\begin{definition}
A \textbf{framed knotoid} is an equivalence class of knotoid diagrams under the equivalence generated by ambient isotopies and $R1',R2,R3$. A diagram representing a framed knotoid is also called a \textbf{framed knotoid diagram}.
\end{definition}

As the terminology suggests, framed knotoid diagrams are just knotoid diagrams with a `framing integer' attached. Our immediate goal is to prove this. To extract an integer from a given diagram $K$ in a canonical way we proceed in analogy with framed knots:

\begin{definition}\label{def:writhe}
Let $K$ be a framed knotoid diagram. We define the \textbf{blackboard framing} of $K$ to be the writhe of $K$, i.e.~the sum of the signs of all crossings of $K$. Here the sign of a crossing is defined analogously as for knot diagrams, see \cite{elhamdadi2020framed}.
\end{definition}

The blackboard framing is an integer that is also well-defined for the framed knotoid that $K$ represents. This last statement follows from invariance of the writhe under $R1',R2,R3$. To justify the use of the term `blackboard framing' for the writhe of a knotoid in analogy with framed knots, we need to interpret the framed knotoid as an object in $S^3$ with an associated canonical transversal vector field.
We do so after introducing framed theta-curves in the next subsection. For now it suffices to work with the combinatorially defined writhe, without reference to a geometric realization. Using the writhe, we conclude the following lemma:

\begin{lemma}\label{lm:framedknotoidbijection}
There is a bijection
\begin{align}
    \{ \text{Framed knotoids} \} &\leftrightarrow \{ \text{Knotoids} \}\times\mathbb{Z} \label{eq:framedknotoidbijection}\\
    K &\mapsto \left(K,\text{writhe}(K)\right) \nonumber.
\end{align}
\end{lemma}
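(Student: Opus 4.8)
The plan is to prove this by showing the stated map is well-defined and then a bijection, checking surjectivity and injectivity separately. The whole argument should run parallel to the framed knot case recalled from \cite{elhamdadi2020framed}, so the real content is verifying that the diagrammatic manipulations involved never invoke a forbidden move near the endpoints.

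First I would settle well-definedness. Framed knotoid equivalence is generated by ambient isotopy together with $R1',R2,R3$, and a single $R1'$ move can be realized by two ordinary $R1$ moves; hence every framed knotoid equivalence is in particular a knotoid equivalence, so the underlying knotoid class $[K]$ depends only on the framed knotoid. Combined with the invariance of $\text{writhe}(K)$ under $R1',R2,R3$ already noted in the text, the assignment $K\mapsto\left([K],\text{writhe}(K)\right)$ descends to framed knotoid classes. Surjectivity is then easy: given a knotoid $\kappa$ and an integer $n$, choose any diagram $D$ representing $\kappa$ and set $w=\text{writhe}(D)$; introducing $|n-w|$ curls of sign $\sgn(n-w)$ by $R1$ moves performed in a small disk disjoint from the endpoints yields a diagram $D'$ with $\text{writhe}(D')=n$ whose underlying knotoid is still $\kappa$, and the framed knotoid of $D'$ maps to $(\kappa,n)$.

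The heart of the matter is injectivity. Suppose two framed knotoid diagrams $D_1,D_2$ have the same underlying knotoid and equal writhe $n$. Equality of the underlying knotoids provides a finite sequence of $R1,R2,R3$ moves and isotopies connecting $D_1$ to $D_2$; the only steps not permitted in the framed setting are the $R1$ moves, each of which changes the writhe by $\pm 1$. Because the two writhes agree, the signed count of these $R1$ moves is zero, so they occur in cancelling pairs of opposite sign. I would then argue, exactly as for framed knots, that each such pair can be eliminated: a curl can be slid along the strand and past crossings using $R2,R3$ and $R1'$ (all of which preserve writhe), and a positive curl brought adjacent to a negative one is removed by an $R1'$ followed by an $R2$ move. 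Performing these cancellations, after first commuting the paired $R1$ moves into adjacent positions in the sequence, converts the given sequence into one using only $R1',R2,R3$, so that $D_1$ and $D_2$ are framed-equivalent.

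The step I expect to be the main obstacle is precisely this curl cancellation: one must check that the sliding and pairing of curls can always be carried out in the interior of the diagram, away from the two endpoints, so that no forbidden move is ever triggered. This is the only place where the knotoid setting could in principle differ from the framed knot setting. Since a curl occupies an arbitrarily small disk and can be routed along the strand to any crossing-free interior region, there is always room on $S^2$ to bring a cancelling pair together without passing an endpoint, which is what makes the framed knot argument of \cite{elhamdadi2020framed} transfer to knotoids essentially verbatim.
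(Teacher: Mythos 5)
Your proposal is correct and follows essentially the same route as the paper: surjectivity by inserting curls, and injectivity by tracking the $R1$ moves in a knotoid equivalence (which must have zero signed count when the writhes agree) and eliminating them via the fact that opposite curls can be slid along the strand and cancelled using writhe-preserving moves. The only cosmetic difference is bookkeeping — the paper replaces each $R1$ creation/deletion by a framed move that parks a residual curl in a neighbourhood of the head and cancels all leftovers there at the end, rather than commuting paired $R1$ moves into adjacency — but the underlying mechanism (curl transport plus cancellation of adjacent opposite curls, all away from the endpoints) is identical.
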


\begin{proof}
First note that surjectivity is clear. Indeed, given a knotoid diagram $K$ we can add $R1$ loops to adjust the framing of $K$ at will without altering the underlying knotoid. (By an $R1$ loop we mean a loop like those pictured in Figure \ref{fig:weakreid}.) For injectivity, suppose that $K,K'$ are equivalent knotoid diagrams with equal writhe. Then we must show that $K,K'$ are equivalent as framed knotoid diagrams. To see this, first note that positive and negative $R1$ loops can be cancelled against each other via a combination of $R2$ and $R3$ moves. See Figure \ref{fig:cancellingloops}.

\begin{figure}[ht]
    \centering
    \includegraphics[width=.9\linewidth]{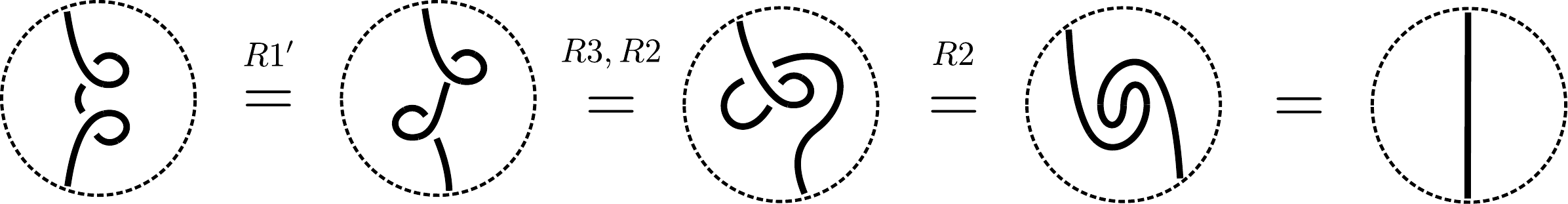}
    \caption{Cancellation of opposite $R1$ loops.}
    \label{fig:cancellingloops}
\end{figure}

Since $K,K'$ are equivalent as knotoids there is a sequence of isotopies of $S^2$ and Reidemeister moves that turns $K'$ into $K$. By the above, we can modify this sequence of moves into a sequence of framed knotoid diagram moves. Namely we replace every deletion of an $R1$ loop by the sequence of isotopies of $S^2$ and $R2,R3$ moves that brings this $R1$ loop to a neighbourhood of the head of $K'$, and every creation of an $R1$ loop by the creation of that loop and its opposite, followed by moving this opposite to the head of $K'$. The result is the diagram $K$, but with a finite number of $R1$ loops concentrated in a neighbourhood of the head. Since $K'$ and $K$ have equal writhe, the sum of the signs of these loops is $0$. Thus we can apply the moves in Figure \ref{fig:cancellingloops} finitely many times to remove all these loops, ending up with a diagram of $K$.
\end{proof}


\subsubsection{Framed Simple Theta-curves}\label{subsec:framedtheta}

Our next goal is to find a geometric realization of framed knotoids, and to interpret their writhe in terms of the framing of this geometric realization.

This means that we want topological spaces whose ambient isotopy classes in $S^3$ are in bijection with framed knotoids. Recall that framed knots are equivalent to tangled ribbons in $S^3$, and hence the geometric realization of framed knot diagrams is the annulus. Also recall the sketch proof of Theorem \ref{thm:knotoidbijection}, which implies that knotoids correspond to the edge $e_0$ of a simple theta-curve. From these two facts it is reasonable to expect that framed knotoids correspond to simple theta-curves in $S^3$, with $e_0$ thickened into a ribbon. The associated geometric realization is pictured in Figure \ref{fig:fancy_framed_thetacurve}.

\begin{figure}[ht]
    \centering
    \includegraphics[width=.6\linewidth]{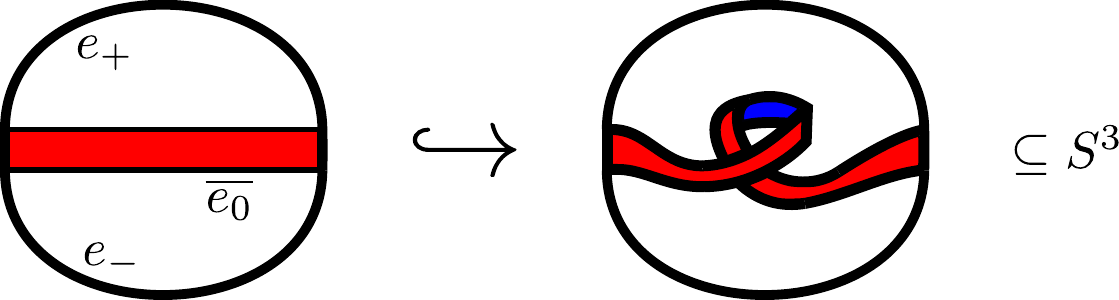}
    \caption{A simple framed theta-curve.}
    \label{fig:fancy_framed_thetacurve}
\end{figure}

\begin{definition}
Let $\overline{\Theta}$ denote the space $\Theta$ with the middle edge $e_0$ thickened into a 2-dimensional ribbon $\overline{e}_0$ attached along neighbourhoods of $v_0,v_1$ as depicted in Figure \ref{fig:fancy_framed_thetacurve}. A \textbf{framed theta-curve} is an embedding $\overline{\Theta}\hookrightarrow S^3$. We consider such embeddings up to label-preserving ambient isotopies of $S^3$.

We say that a framed theta-curve is \textbf{simple} if it has integer framing and the image of $e_+\cup e_-$ is the unknot, the former meaning that if we put $\theta$ in a form such that $e_+\cup e_-$ is manifestly unknotted then the same side of $\overline{e}_0$ must face `outwards' at $v_0$ and $v_1$. In other words, it means that we want $\theta(\overline{e}_0)$ to make an integer number of turns through $S^3$, in some sense; see Figure \ref{fig:fancy_framed_thetacurve}. 
\end{definition}

To make this notion of `turns' more precise we define standard framed simple theta-curves and their framing integers:

\begin{definition}
Let $\mathbb{R}^2$ denote the plane $\mathbb{R}^2\times \{0\} \subseteq \mathbb{R}^3\subseteq S^3$. In analogy to the proof of Theorem \ref{thm:knotoidbijection} we say that a framed simple theta-curve $\theta$ is \textbf{standard} if the same assumptions on $\theta(e_+),\theta(e_-)$ as for standard theta-curves hold, \textit{and} the neighbourhoods of $\theta(e_+\cup e_-)$ where $\theta(\overline{e}_0)$ is attached are parallel straight lines in $\mathbb{R}^2$. In particular the example in Figure \ref{fig:fancy_framed_thetacurve} is standard.

Let $e_0$ be one of the lengths of the boundary of $\theta(\overline{e}_0)$. The direction into the interior of $\overline{e}_0$ that is transversal to $e_0$ defines a path in $SO(2)$ when considered along all of $\theta(\overline{e}_0)$. In the case of a \textit{standard} simple framed theta-curve this path is a loop by assumption. The corresponding element of $\pi_1(SO(2))\cong \mathbb{Z}$ is defined to be the \textbf{framing integer} $\text{fr}(\theta)$ of $\theta$.
\end{definition}

\begin{remark}\label{rk:framing_extension}
Note that any framed simple theta-curve can be brought into standard form. Indeed, it suffices to extend the same moves that bring a simple theta-curve into standard form to a framed simple theta-curve, and then to adjust regular neighbourhoods of the end-points to be vertical. (For the definition of a regular neighbourhood of an end-point, see \cite{turaev2012knotoids}. Here we assumed without loss of generality that the attachments of $\overline{e}_0$ lie inside such a neighbourhood.) This works because any move on theta-curves can easily be extended to a move of framed theta-curves, up to deformations that shrink a portion of the ribbon $\overline{e}_0$. We will see in subsection \ref{subsec:biframed} that for biframed theta-curves this is not quite the case.

It is clear that the framing integer is independent of the standard form produced in this way. Indeed, the \textit{relative} twisting between the transversal vectors to $e_0$ into $\overline{e}_0$ at $v_0,v_1$ when transforming a theta-curve into standard form is clearly independent of the standard form chosen. 
\end{remark}

Remark \ref{rk:framing_extension} allows us to define the framing integer of \textit{any} simple theta-curve $\theta$, namely as the framing integer of any standard theta-curve equivalent to $\theta$. Using this we can conclude the following:

\begin{lemma}\label{lm:framedthetabijection}
There is a bijection
\begin{align}
    \{ \text{Framed simple theta-curves} \} &\leftrightarrow \{ \text{Simple theta-curves} \}\times\mathbb{Z} \label{eq:framedthetabijection}\\
    \theta &\mapsto \left(\theta^\circ,\text{fr}(\theta)\right), \nonumber
\end{align}
where $\theta^\circ$ is the simple theta-curve obtained from the projection $\overline{e}_0\twoheadrightarrow e_0$ of a ribbon onto one of its $1$-dimensional boundary lengths (which is a smooth arc).
\end{lemma}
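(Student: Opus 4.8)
The plan is to mirror the proof of Lemma~\ref{lm:framedknotoidbijection}, replacing Reidemeister moves and $R1$ loops by ambient isotopies of $S^3$ and twists of the ribbon $\overline{e}_0$. First I would verify that the map is well-defined. Any label-preserving ambient isotopy of a framed simple theta-curve $\theta$ restricts to a label-preserving ambient isotopy of the simple theta-curve obtained by projecting $\overline{e}_0$ onto a boundary length $e_0$, so $\theta^\circ$ is well-defined up to isotopy; one also checks that the two choices of boundary length yield isotopic arcs. That $\text{fr}(\theta)$ is well-defined is exactly the content of Remark~\ref{rk:framing_extension}.

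Surjectivity is the geometric analog of adding $R1$ loops. Given a simple theta-curve $\theta_0$ and $n\in\mathbb{Z}$, I would put $\theta_0$ into standard form, thicken $e_0$ into a ribbon carrying $n$ full twists, and make regular neighbourhoods of the endpoints vertical; this produces a standard framed simple theta-curve with framing $n$ that projects to $\theta_0$.

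The substance is injectivity. Suppose $\theta,\theta'$ are framed simple theta-curves with $\theta^\circ$ ambient isotopic to $\theta'^\circ$ and $\text{fr}(\theta)=\text{fr}(\theta')$. By Remark~\ref{rk:framing_extension} I may bring both into standard form without changing their images under the bijection. Turaev's result that ambient isotopic standard simple theta-curves are isotopic \emph{within} the standard class supplies a sequence of moves carrying $\theta'^\circ$ to $\theta^\circ$. Again by Remark~\ref{rk:framing_extension}, each such move extends to the framed setting up to local deformation of $\overline{e}_0$, so I obtain a standard framed theta-curve with underlying curve $\theta^\circ$ whose ribbon carries some collection of twists; path-independence of the relative twisting (Remark~\ref{rk:framing_extension}) guarantees that its framing is still $\text{fr}(\theta')=\text{fr}(\theta)$. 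It then remains to show that two standard framed simple theta-curves sharing an underlying standard theta-curve and an equal framing integer are ambient isotopic. Concentrating all twists near one vertex, this reduces to the ribbon analog of Figure~\ref{fig:cancellingloops}: a $+1$ and a $-1$ twist of $\overline{e}_0$ cancel by an ambient isotopy. Since the signed twist totals on both sides equal the common framing integer, repeated cancellation identifies $\theta$ with $\theta'$.

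I expect the injectivity step, and specifically the bookkeeping of framing under the extended isotopy, to be the main obstacle. The delicate point — licensed by Remark~\ref{rk:framing_extension} but worth making explicit — is that extending a move of underlying theta-curves to the framed setting may stretch or shrink $\overline{e}_0$ but cannot change the framing integer, since that integer is the homotopy class in $\pi_1(SO(2))$ of the transversal-direction loop and is hence invariant under such deformations. Granting this, the cancellation of opposite twists is precisely the geometric realization of the loop-cancellation used in the proof of Lemma~\ref{lm:framedknotoidbijection}.
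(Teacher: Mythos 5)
Your proposal is correct and follows essentially the same route as the paper: extend an ambient isotopy of the underlying simple theta-curves to the framed setting, observe that this can only introduce extra twists of $\overline{e}_0$, and use equality of framing integers to cancel those twists. The paper's own proof is a terser version of exactly this argument; your additional detour through standard forms and Turaev's isotopy-within-the-standard-class result is a reasonable way of making the extension step precise but does not change the underlying strategy.
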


\begin{proof}
This proof is analogous to that of Lemma \ref{lm:framedknotoidbijection}: surjectivity is clear. For injectivity suppose that $\theta_1,\theta_2$ have equal framing and that $\theta_1^\circ\simeq\theta_2^\circ$. Then an ambient isotopy relating $\theta_1^\circ$ to $\theta_2^\circ$ can be extended to an ambient isotopy relating $\theta_1$ to a theta-curve $\theta_3$ such that $\theta_3^\circ$ is \textit{equal} to $\theta_2^\circ$. This extension may end up adding extra twists (both clockwise and counter-clockwise) to $\theta_3$ that are not present in $\theta_2$. But since $\text{fr}(\theta_3)=\text{fr}(\theta_1)=\text{fr}(\theta_2)$ these extra twists must cancel to give $\theta_1\simeq \theta_3\simeq \theta_2$, proving injectivity.
\end{proof}

Combining bijection \eqref{eq:framedthetabijection} with bijection \eqref{eq:framedknotoidbijection} and Theorem \ref{thm:knotoidbijection} gives a sequence of bijections

\begin{align}
    \{\text{Framed knotoids}\} &\xleftrightarrow{\text{writhe}} \{\text{Knotoids}\}\times \mathbb{Z} \nonumber\\
    &\xleftrightarrow{\text{Thm } \ref{thm:knotoidbijection}} \{\text{Simple theta-curves}\}\times \mathbb{Z} \nonumber\\
    &\xleftrightarrow{\text{framing}} \{\text{Framed simple theta-curves}\}. \label{eq:framedrealization}
\end{align}

Thus we obtain the following corollary to Theorem \ref{thm:knotoidbijection}:

\begin{corollary}\label{cor:framedrealization}
The geometric realization of framed knotoids is $\overline{\Theta}$, i.e.~the equivalence classes of embeddings of $\overline{\Theta}$ are in bijective correspondence with framed knotoids.
\end{corollary}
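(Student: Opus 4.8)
The plan is to simply compose the three bijections that have already been established, reading off the chain displayed in \eqref{eq:framedrealization}. Concretely, starting from a framed knotoid I would first apply the writhe bijection of Lemma \ref{lm:framedknotoidbijection} to record it as a pair consisting of an underlying knotoid together with its integer writhe. Next I would apply Turaev's bijection of Theorem \ref{thm:knotoidbijection} to the knotoid coordinate, turning the pair into a pair $(\theta^\circ,n)$ consisting of a simple theta-curve and an integer. Finally I would apply the framing bijection of Lemma \ref{lm:framedthetabijection} in reverse to reassemble this pair into a single framed simple theta-curve, i.e.~an embedding of $\overline{\Theta}$ up to label-preserving ambient isotopy. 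Since each of the three maps is a bijection, so is the composite, and it carries framed knotoids to equivalence classes of embeddings of $\overline{\Theta}$, which is exactly the claim.

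The one point that deserves care, and which I would single out as the main content of the proof, is that this composite agrees with the geometrically natural map one expects: the map that thickens the arc $e_0$ of the realizing theta-curve into a ribbon whose number of turns is dictated by the blackboard framing. In other words I would verify that the integer coordinate is transported consistently across the two middle steps, so that the writhe assigned to a framed knotoid diagram in Lemma \ref{lm:framedknotoidbijection} corresponds, under Theorem \ref{thm:knotoidbijection} and Lemma \ref{lm:framedthetabijection}, to the framing integer $\text{fr}(\theta)$ of the resulting framed theta-curve. This is essentially the statement that the blackboard framing of a knotoid plays the role of the framing integer, paralleling the classical fact for framed knots recalled in Section \ref{subsec:framedandbiframed}, and it is what justifies calling $\overline{\Theta}$ the genuine geometric realization rather than merely asserting an abstract bijection of sets.

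I expect the only real obstacle to be bookkeeping: confirming that all three maps are compatible with the chosen equivalence relations — ambient isotopy of $S^3$ on the theta-curve side, and the framed-diagram moves $R1',R2,R3$ on the knotoid side — so that the integer factors match and the composite descends to equivalence classes. Since Lemmas \ref{lm:framedknotoidbijection} and \ref{lm:framedthetabijection} already supply the two outer bijections together with their integer coordinates, and Theorem \ref{thm:knotoidbijection} handles the middle, no new topological input is required; the argument is a formal composition once this compatibility of framings is observed.
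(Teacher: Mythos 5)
Your proposal is correct and follows the paper's own route exactly: the corollary is obtained by composing the writhe bijection of Lemma \ref{lm:framedknotoidbijection}, Turaev's bijection of Theorem \ref{thm:knotoidbijection}, and the framing bijection of Lemma \ref{lm:framedthetabijection}, precisely as in the chain \eqref{eq:framedrealization}. The compatibility of the integer coordinates via the blackboard framing, which you flag as the main point of care, is likewise what the paper addresses (in Remark \ref{rk:bijection}) to upgrade the abstract bijection to a topologically meaningful one.
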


\begin{remark}\label{rk:bijection}
A priori the bijection of Corollary \ref{cor:framedrealization} is not topological in nature: it simply says that both framed knotoids and framed simple theta-curves correspond to knotoids with an integer attached. As with framed knots, the topological interpretation of Corollary \ref{cor:framedrealization} passes through the blackboard framing.

\begin{figure}[ht]
    \centering
    \includegraphics[width=.25\linewidth]{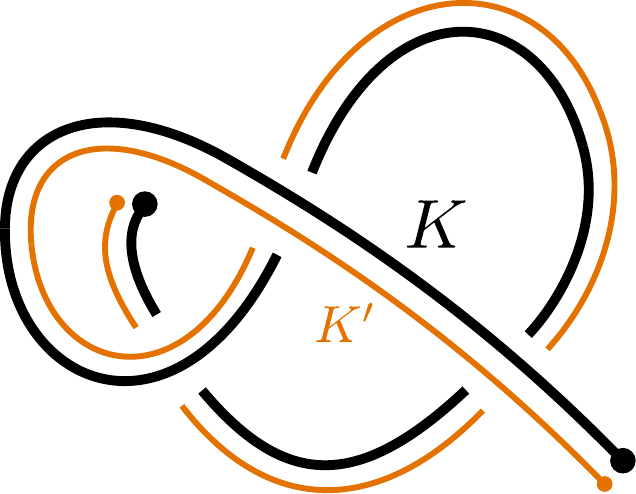}
    \caption{The blackboard framing of a knotoid diagram.}
    \label{fig:blackboard}
\end{figure}

Given a knotoid diagram $K$, let $K'$ be given by a small shift transversal to $K$; see Figure \ref{fig:blackboard}. Under the inverse of the projection assignment from Theorem \ref{thm:knotoidbijection}, this $K'$ specifies a transversal vector field on $e_0$ of the standard theta-curve corresponding to $K$. (As a convention, we take this vector field to be \textit{upwards} in $\mathbb{R}^2\times\{0\}$ at $v_0,v_1$.) In turn this specifies a ribbon extension $\overline{e}_0$ to $e_0$. The corresponding framing is defined to be the \textit{blackboard framing} of $K$. This is a number equal to the writhe of $K$. The proof of this is analogous to that for framed knots; see \cite{elhamdadi2020framed}.

In this way, the notions of framing for knotoids and theta-curves coincide under the map $\{\text{Knotoids}\}\to\{\text{Simple theta-curves}\}$ from Theorem \ref{thm:knotoidbijection}.
\end{remark}

Framed simple theta-curves, and hence framed knotoids, could be useful in their own right for modelling DNA structures. Namely the framing can be used to model the double helix structure of a DNA strand more accurately. A consequence is that framed knotoid models of DNA can detect features such as \textit{supercoiling} \cite{bates2005dna}, which is undetectable by knotoid models due to the $R1$ relation. For such modelling purposes, it is natural to ask whether knotoids can be generalized to encode non-integer framing. Clearly this can be done for framed simple theta-curves, but there is no simple way to translate this to knotoid diagrams.

Defining a notion of half-integer framing that is easy to recognize in diagrams is one of the motivations for introducing `\textit{biframed}' knotoids and theta-curves in the next subsections.


\subsubsection{Biframed Simple Theta-curves}\label{subsec:biframed}

To define biframed knotoids we will work backwards; we start by defining the geometric realization and its embeddings, and consider the associated diagrams in the next subsection. The idea of a biframed theta-curve is to take a framed theta-curve, and add a secondary \textit{coframing} by allowing the outer ring $e_+\cup e_-$ to carry framing information. The associated geometric realization is depicted in Figure \ref{fig:fancy_biframed_thetacurve}.

\begin{figure}[ht]
    \centering
    \includegraphics[width=.6\linewidth]{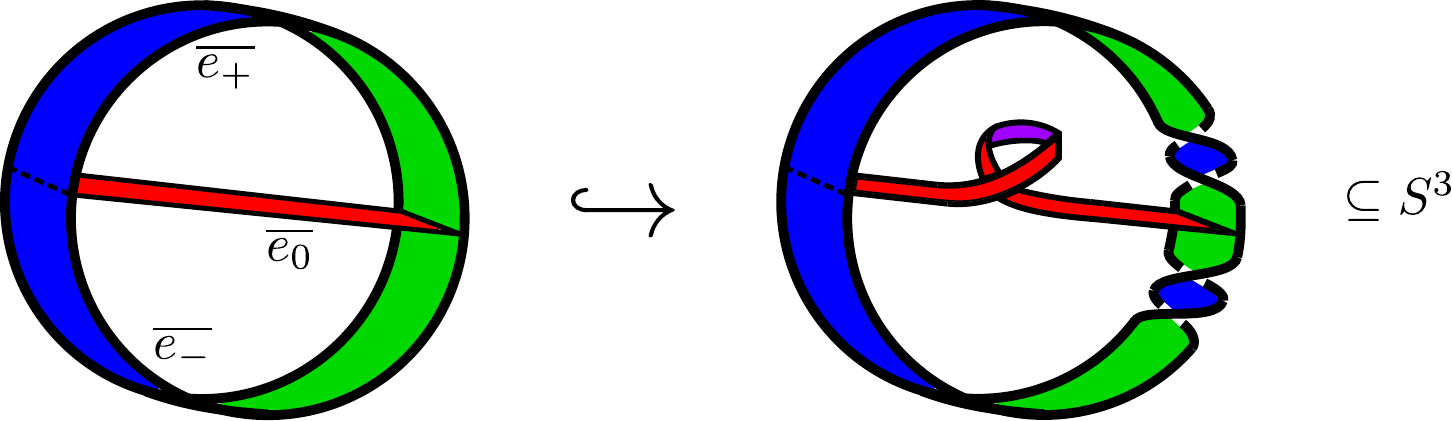}
    \caption{A (simple) biframed theta-curve.}
    \label{fig:fancy_biframed_thetacurve}
\end{figure}

\begin{definition}
Let $\lvert\Theta\rvert$ denote the space depicted on the left in Figure \ref{fig:fancy_biframed_thetacurve}, i.e.~the space $\Theta$ with $\{e_-,e_0,e_+\}$ thickened into ribbons $\{\overline{e}_-,\overline{e}_0,\overline{e}_+\}$ and with $\overline{e}_0$ attached at attaching arcs $a_0,a_1$ along the width of $\overline{e}_-\cup\overline{e}_+$. This means that if we note that $\overline{e}_+\cup\overline{e}_- \cong S^1\times [0,1]$ then the arcs $a_i$ are equal to $p_i\times [0,1]$ for some $p_i\in S^1$ and $i\in\{0,1\}$. A \textbf{biframed theta-curve} is an embedding $\theta:\lvert\Theta\rvert\hookrightarrow S^3$. We consider such embeddings up to label-preserving ambient isotopies of $S^3$.
\end{definition}

To justify the terminology `biframed', we give a way of assigning \textit{two} numbers to (a certain class of) biframed theta-curves. One of these is analogous to the framing of a framed theta-curve. The other is derived from the fact that for a biframed theta-curve $\overline{e}_-\cup\overline{e}_+$ is also a ribbon, and shall be referred to as the `coframing' of a biframed theta-curve. As for framed theta-curves, to facilitate the definition of these numbers we first define a class of `standard' theta-curves.

\begin{definition}\label{def:standardbi}
A biframed theta-curve $\theta$ is said to be \textbf{standard} if the following conditions hold:
\begin{itemize}
    \item The annulus $\theta(\overline{e}_-\cup\overline{e}_+)$ is the unframed (i.e.~framing $0$) unknot, and $\theta(a_i)$ lies in the horizontal plane $\mathbb{R}^2\times\{0\}\subseteq S^3$ for $i\in\{0,1\}$.
    \item The annulus $\theta(\overline{e}_-\cup\overline{e}_+)$ is in manifestly unframed form, i.e.~of the form $S^1\times[-\epsilon,\epsilon]$ with $S^1\subseteq\mathbb{R}^2\times\{0\}\subseteq S^3$. 
    \item Let $e_0$ denote one of the lengths of the boundary of $\theta(\overline{e}_0)$. Some neighbourhoods of the attaching arcs of $\theta(\overline{e}_0)$ to $\theta(\overline{e}_-\cup\overline{e}_+)$ lie in a single plane $P$, and the tangent of $e_0$ is perpendicular to $\theta(\overline{e}_-\cup\overline{e}_+)$ at both attaching points.
    \item Away from the attaching arcs, $\theta(\overline{e}_0)$ does not intersect the vertical bars perpendicular to $P$ that are depicted in Figure \ref{fig:bars}.
\end{itemize}

\begin{figure}[ht]
    \centering
    \includegraphics[width=.22\linewidth]{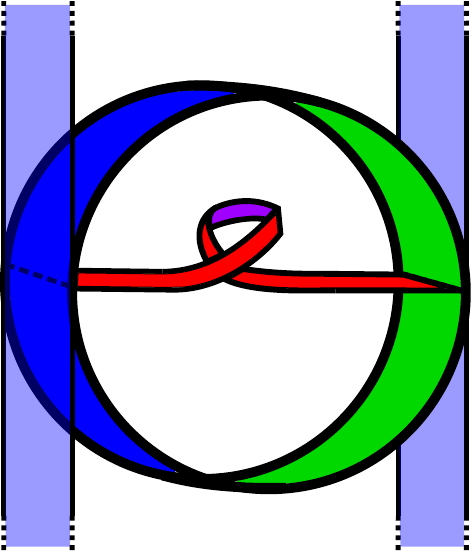}
    \caption{A standard embedding of $\lvert\Theta\rvert$, along with the vertical bars from Definition \ref{def:standardbi} that $\theta(\overline{e}_0)$ is not allowed to intersect.}
    \label{fig:bars}
\end{figure}
\end{definition}

\begin{definition}\label{def:coframing}
Let $\theta,e_0$ be as in Definition \ref{def:standardbi}. As with framed theta-curves, the direction of the interior of $\theta(\overline{e}_0)$ perpendicular to $e_0$ determines a framing integer in $\pi_1(SO(2))\cong\mathbb{Z}$. To define the coframing, parametrize $e_0$ by $t\in[0,1]$ such that $e_0(0)\in a_0$ and $e_0(1)\in a_1$. Note that $e_0$ is a smooth arc by assumption on $\theta: \lvert\Theta\rvert\hookrightarrow S^3$. For $t\in(0,1)$ and $i\in\{0,1\}$, consider the vectors in $S^3$ from $e_0(i)$ to $e_0(t)$. Project these vectors onto $P$ and normalize them. This is possible by the fourth assumption of standard-ness: note that without this assumption some of these projections onto $P$ could yield the zero vector. This results in paths $p_i$ in $SO(2)$, which are loops by the first and third assumption of standard-ness. Let $n_i$ denote the associated elements of $\pi_1(SO(2))\cong\mathbb{Z}$. Then we define the \textbf{coframing} of $\theta$ to be given by
\begin{equation}\label{eq:coframing}
    \text{cofr}(\theta) = n_0 - n_1.
\end{equation}
\end{definition}

It is easy to see that any biframed theta-curve $\theta$ for which $\theta(\overline{e}_-\cup\overline{e}_+)$ is the unframed unknot can be put into standard form: moves of a theta-curve \textit{away from the attaching points} extend readily to moves of biframed theta-curves. Moves at the attaching points can be extended by replacing swivels around the end-points by twists that drag $\overline{e}_0$ along, as in the right-hand side of Figure \ref{fig:coframing_example}. After such an extension, $\theta$ can be put into standard form by applying moves such as that depicted in the right-hand side of Figure \ref{fig:coframing_example} to put $\theta(\overline{e}_-\cup\overline{e}_+)$ into standard unframed form, followed by deformations in regular neighbourhoods of $a_0,a_1$ and deformations moving $\theta(\overline{e}_0)$ away from the bars of Figure \ref{fig:bars}.

\begin{remark}\label{rk:coframing_extension}
As before it is clear that $\text{fr}(\theta)$ is independent of how $\theta$ is put into standard form, but this is not immediate for $\text{cofr}(\theta)$: the ambiguity arises because we can undo a twist in $\theta(\overline{e}_-\cup\overline{e}_+)$ either at $a_0$ or at $a_1$, dragging $\overline{e}_0$ around one end of $\theta(\overline{e}_-\cup\overline{e}_+)$ or the other. This is illustrated in Figure \ref{fig:coframing_welldefinedness}.

\begin{figure}[ht]
    \centering
    \includegraphics[width=\linewidth]{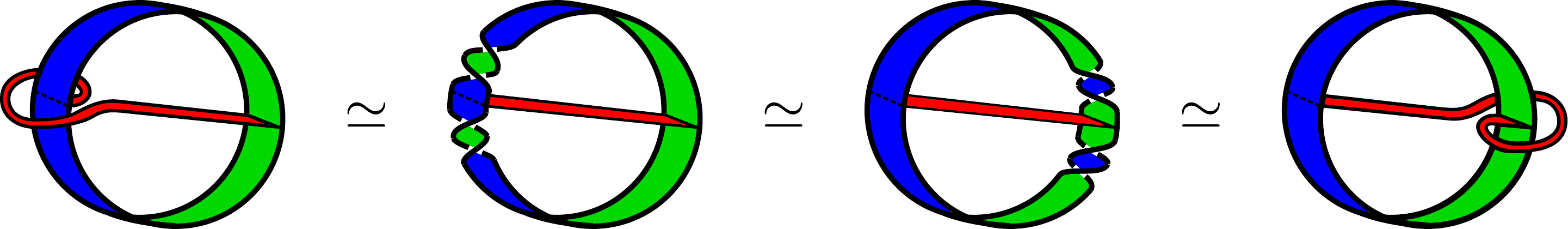}
    \caption{Two ways to bring a $(0,1)$-biframed theta-curve into standard form.}
    \label{fig:coframing_welldefinedness}
\end{figure}

As can be read off immediately from Figure \ref{fig:coframing_welldefinedness}, the difference between undoing a twist in $\theta(\overline{e}_-\cup\overline{e}_+)$ at one end or the other is a difference $n_i\to n_i-1$ for $i\in\{0,1\}$. The minus sign in Equation \eqref{eq:coframing} cancels this difference, so that the coframing is independent of the chosen standard form. Analogous reasoning holds for oppositely directed twist in $\theta(\overline{e}_-\cup\overline{e}_+)$. Thus Definition \ref{def:coframing} gives a well-defined coframing for \textit{any} biframed theta-curve $\theta$ such that $\theta(\overline{e}_-\cup\overline{e}_+)$ is the unframed unknot.
\end{remark}

Definition \ref{def:coframing} and Remark \ref{rk:coframing_extension} allow us to define the framing and coframing of \textit{any} biframed theta-curve $\theta$ for which $\theta(\overline{e}_-\cup\overline{e}_+)$ is the unframed unknot. Using this we can define the objects that are of interest to us, namely `simple' biframed theta-curves:

\begin{definition}
A biframed theta-curve $\theta$ is \textbf{simple} if the annulus $\theta(\overline{e}_-\cup\overline{e}_+)$ is the unframed unknot and $\theta$ has integer framing and coframing numbers.
\end{definition}

In particular the example shown in Figure \ref{fig:fancy_biframed_thetacurve} is simple. 

\begin{remark}
For both standard and simple biframed theta-curves, note that we have restricted to embeddings of $\lvert\Theta\rvert$ such that $\overline{e}_-\cup\overline{e}_+$ has framing $0$. One may think that the complexity added by making $\overline{e}_-\cup\overline{e}_+$ a ribbon is negated by this restriction. To see that this is not the case, consider Figure \ref{fig:coframing_example}.

\begin{figure}[ht]
    \centering
    \includegraphics[width=\linewidth]{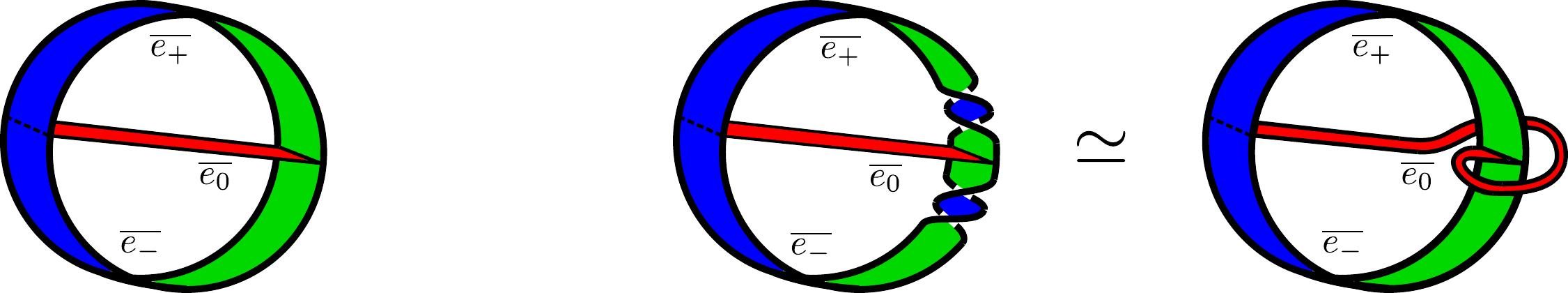}
    \caption{Two standard biframed theta-curves differing only in coframing.}
    \label{fig:coframing_example}
\end{figure}

Figure \ref{fig:coframing_example} depicts two \textit{simple} biframed theta-curves. Indeed, for the second theta-curve $\overline{e}_-\cup\overline{e}_+$ can clearly be deformed into the standard unframed form (as depicted). However, the deformation that accomplishes this drags $\overline{e}_0$ around $\overline{e}_-\cup\overline{e}_+$, as pictured, resulting in a biframed theta-curve that \textit{differs} from the first. The coframing is defined to record exactly such differences.
\end{remark}


The upshot of the biframing is the following lemma:

\begin{lemma}\label{lm:biframedthetabijection}
There is a bijection:
\begin{align}
    \{ \text{Simple biframed theta-curves} \} &\leftrightarrow \{ \text{Simple theta-curves} \}\times\mathbb{Z}^2 \label{eq:biframedthetabijection}\\
    \theta &\mapsto \left(\theta^\circ,\text{fr}(\theta),\text{cofr}(\theta)\right). \nonumber
\end{align}
Here $\theta^\circ$ is the theta-curve given by any one component of the $1$-dimensional boundary of $\theta(\lvert\Theta\rvert)$.
\end{lemma}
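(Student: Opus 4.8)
The plan is to mimic the proofs of Lemmas \ref{lm:framedknotoidbijection} and \ref{lm:framedthetabijection}, the essential new feature being that we must now track \emph{two} integers, $\text{fr}$ and $\text{cofr}$, rather than one. First I would record that the assignment $\theta\mapsto(\theta^\circ,\text{fr}(\theta),\text{cofr}(\theta))$ is well-defined: for a simple biframed theta-curve the annulus $\theta(\overline{e}_-\cup\overline{e}_+)$ is by definition the unframed unknot, so Definition \ref{def:coframing} together with the well-definedness established in Remark \ref{rk:coframing_extension} assigns $\text{fr}(\theta)$ and $\text{cofr}(\theta)$ unambiguously; both are manifestly invariant under label-preserving ambient isotopy, and one checks that the simple theta-curve $\theta^\circ$ does not depend on which boundary component of $\theta(\lvert\Theta\rvert)$ is chosen.

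For surjectivity, given a simple theta-curve $\theta^\circ$ and a pair $(m,n)\in\mathbb{Z}^2$, I would thicken $\theta^\circ$ into a simple biframed theta-curve having $\theta^\circ$ as a boundary component, taking the thickening of $e_+\cup e_-$ so that $\theta(\overline{e}_-\cup\overline{e}_+)$ is the unframed unknot. Its framing and coframing are then corrected to $(m,n)$ by two essentially independent moves: inserting a full twist into the ribbon $\overline{e}_0$ changes $\text{fr}$ by $\pm 1$ while leaving $\text{cofr}$ fixed, and dragging an attaching arc of $\overline{e}_0$ once around $\theta(\overline{e}_-\cup\overline{e}_+)$, as in Figure \ref{fig:coframing_example}, changes $\text{cofr}$ by $\pm 1$; any incidental change the latter makes to $\text{fr}$ is undone by the former. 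Hence every triple is realized.

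For injectivity, suppose $\theta_1,\theta_2$ are simple biframed theta-curves with $\theta_1^\circ\simeq\theta_2^\circ$, $\text{fr}(\theta_1)=\text{fr}(\theta_2)$ and $\text{cofr}(\theta_1)=\text{cofr}(\theta_2)$. Exactly as in Lemma \ref{lm:framedthetabijection}, I would extend an ambient isotopy carrying $\theta_1^\circ$ to $\theta_2^\circ$ to an ambient isotopy of the whole biframed structure, bringing $\theta_1$ to some $\theta_3$ with $\theta_3^\circ$ \emph{equal} to $\theta_2^\circ$. The extension may introduce extra twists of $\overline{e}_0$ and extra wraps of $\overline{e}_0$ around $\theta(\overline{e}_-\cup\overline{e}_+)$ relative to $\theta_2$, but these are precisely the differences recorded by $\text{fr}$ and $\text{cofr}$; since both invariants are preserved by ambient isotopy and agree for $\theta_1$ and $\theta_2$, the net twist and the net wrap both vanish, giving $\theta_1\simeq\theta_3\simeq\theta_2$.

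The main obstacle is the step the text already flags as failing to be routine for biframed curves: extending a move of the underlying theta-curve to the ribbon structure is more delicate here than in the framed case, since moves near the attaching arcs can generate coframing wraps as well as framing twists. The crux is therefore to show that $\text{fr}$ and $\text{cofr}$ jointly form a \emph{complete} record of the ribbon data over a fixed underlying curve — that twisting and wrapping are the only available modifications — and that the two are genuinely independent, so that cancelling one does not reintroduce the other. This independence rests on $\text{fr}$ being a purely local measurement of the transversal direction into $\overline{e}_0$ along $e_0$, whereas $\text{cofr}$ measures the planar wrapping in $P$, together with the compensating minus sign in Equation \eqref{eq:coframing} that renders $\text{cofr}$ insensitive to which end the wrapping is unwound at.
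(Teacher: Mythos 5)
Your proposal is correct and takes essentially the same route as the paper, whose own proof is a two-line sketch declaring surjectivity clear and arguing injectivity by extending an equivalence $\theta_1^\circ\cong\theta_2^\circ$ to the ribbons up to framing twists and coframing loops that must cancel when $\text{fr}$ and $\text{cofr}$ agree. Your additional remarks on well-definedness and on the delicacy of extending moves near the attaching arcs correctly identify the points the paper handles via Remark \ref{rk:coframing_extension} and the discussion preceding it.
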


\begin{proof}
This proof is analogous to that of Lemma \ref{lm:framedthetabijection}. In short: surjectivity is clear and for injectivity an equivalence $\theta_1^\circ \cong \theta_2^\circ$ extends to $\theta_1\cong \theta_2$ up to framing twists and coframing loops that must cancel if $\text{fr}(\theta_1)=\text{fr}(\theta_2)$ and $\text{cofr}(\theta_1)=\text{cofr}(\theta_2)$ (e.g.~via Figure \ref{fig:coframing_welldefinedness}).
\end{proof}


\subsubsection{Biframed Knotoids}

Next we define biframed knotoids. As the terminology suggests, this will turn out to be the class of diagrams whose geometric realization is $\lvert\Theta\rvert$. In light of Remark \ref{rk:bijection} and the proof of Theorem \ref{thm:knotoidbijection}, it is no surprise that these are the knotoid diagrams obtained from projecting the smooth arc $e_0$ of a standard biframed theta-curve onto its plane $P$.

\begin{definition}
Pick two points $v_0,v_1\in S^2$. Denote the directed straight arc in $\mathbb{R}^2\cup\{\infty\}\cong S^2$ from $v_0$ to $v_1$ by $L$. A \textbf{biframed knotoid} diagram is a knotoid diagram $K$ in $S^2$ with leg and head equal to $v_0$ and $v_1$ respectively, such that the tangents of $K$ coincide with $L$ in neighbourhoods of $v_0,v_1$. Two biframed knotoid diagrams are said to be \textbf{equivalent} if they can be related by applications of $R1',R2,R3,R4$, ambient isotopies of $S^2$ \textit{away from $v_0,v_1$}, and the \textit{coframing identities} depicted in Figure \ref{fig:coframing_identities} which encode the move in Figure \ref{fig:coframing_welldefinedness}.
\end{definition}

\begin{figure}[ht]
    \centering
    \includegraphics[width=\linewidth]{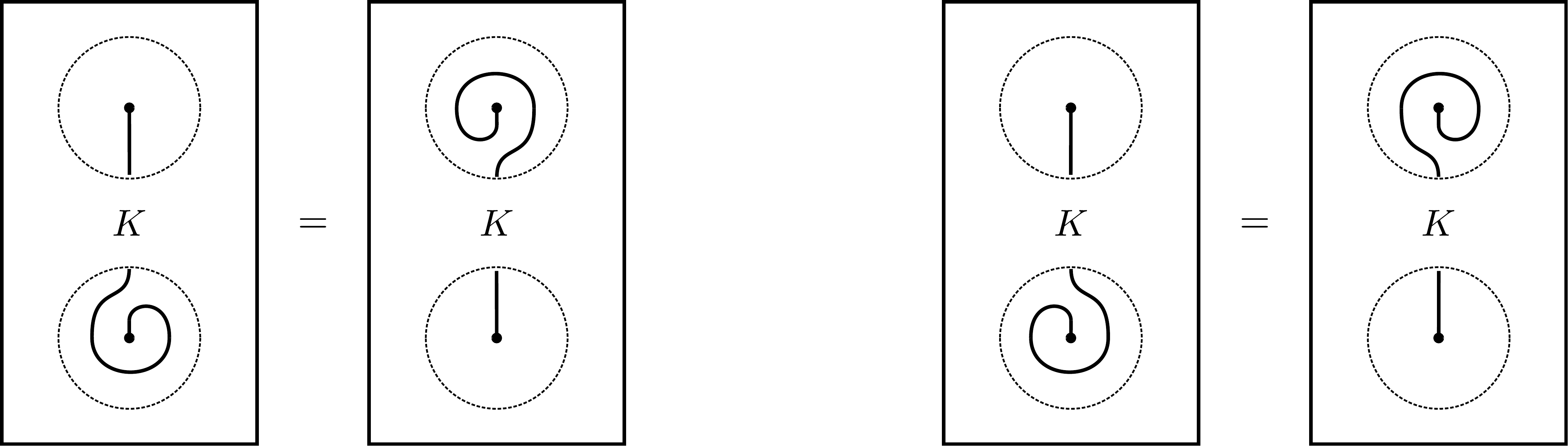}
    \caption{The coframing identities. Here $K$ denotes the rest of the knotoid diagram, which is fixed under the identities.}
    \label{fig:coframing_identities}
\end{figure}

Fixing $v_0,v_1$ uniformly for all biframed knotoid diagrams prevents the case of biframed knotoid diagrams that are in-equivalent only because their end-points disagree and cannot be moved. An example of a biframed knotoid diagram is given in Figure \ref{fig:biframed_example}.

\begin{figure}[ht]
    \centering
    \includegraphics[width=.4\linewidth]{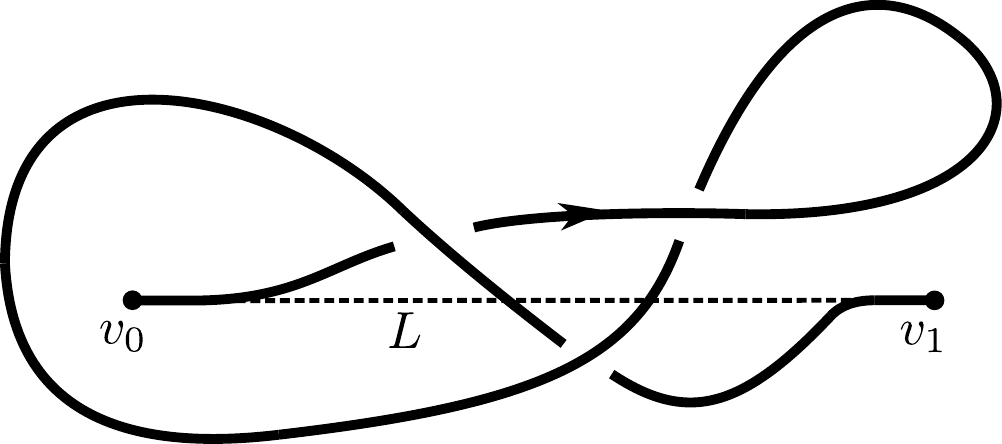}
    \caption{Example of a biframed knotoid diagram.}
    \label{fig:biframed_example}
\end{figure}

To give meaning to the above terminology, we define the framing and coframing of a biframed knotoid diagram. After Definitions \ref{def:writhe} and \ref{def:coframing}, there are no surprises here.

\begin{definition}\label{def:diagcoframing}
Let $K$ be a biframed knotoid diagram. We define its \textbf{framing} to be its writhe. To define its coframing, let $n_i$ be the winding number of $K$ around $v_i$ for $i\in \{0,1\}$. By assumption on the tangents of $K$, $n_i\in \mathbb{Z}$. In analogy with Equation \eqref{eq:coframing} we define the \textbf{coframing} of $K$ by
\[
    \text{cofr}(K) = n_0-n_1.
\]
\end{definition}

To see that the coframing is well-defined we must verify that it is invariant under the moves on biframed knotoid diagrams. This is the content of the following lemma.

\begin{lemma}
The coframing is a biframed knotoid diagram invariant.
\end{lemma}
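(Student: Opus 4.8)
The plan is to verify invariance of the coframing $\mathrm{cofr}(K) = n_0 - n_1$ under each of the moves generating equivalence of biframed knotoid diagrams, where $n_i$ denotes the winding number of $K$ around $v_i$. Since the coframing is a combinatorial quantity built from winding numbers, the whole argument reduces to checking that each move leaves $n_0 - n_1$ unchanged; I would organize the proof move-by-move. First I would observe that the moves $R1', R2, R3$ are all \emph{local}: they modify the diagram only within a small disk $D$ in the interior of $S^2$, away from the endpoints $v_0, v_1$. The winding number $n_i$ of a based curve around a point $v_i$ not lying in $D$ depends only on the total turning of the tangent (equivalently, the homotopy class of the strand relative to its fixed endpoints), and a Reidemeister move supported in $D$ is a homotopy of the underlying immersed arc rel endpoints that does not sweep across $v_i$. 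Hence each of $n_0$ and $n_1$ is \emph{individually} preserved by $R1', R2, R3$, so $\mathrm{cofr}(K)$ is trivially invariant under these.

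Next I would treat the spherical move $R4$ and the ambient isotopies of $S^2$ away from $v_0, v_1$. The subtlety here is that these moves can sweep a strand around the back of $S^2$, which \emph{can} change an individual winding number $n_i$: winding number is only well-defined up to the choice of where the point at infinity sits, and sliding a strand past $\infty$ (or equivalently around the sphere) shifts $n_i$. The key point I would make precise is that such a move shifts $n_0$ and $n_1$ \emph{by the same integer}, because the strand being swept passes around both basepoints simultaneously (it is external to the diagram and encircles the whole picture). Therefore the \emph{difference} $n_0 - n_1$ is unchanged, even though the individual $n_i$ need not be. This is exactly the reason the coframing is defined as a difference rather than as a single winding number, mirroring the role of the minus sign in Equation \eqref{eq:coframing} and Remark \ref{rk:coframing_extension}.

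Finally I would address the coframing identities of Figure \ref{fig:coframing_identities}. These are precisely the diagrammatic encodings of the move in Figure \ref{fig:coframing_welldefinedness}, in which a twist near $v_0$ is traded for a twist near $v_1$ by dragging the strand around one end of the outer annulus. As explained in Remark \ref{rk:coframing_extension}, this operation changes each $n_i$ by $1$ in a correlated way: undoing a twist at $v_0$ effects $n_0 \to n_0 - 1$, while the compensating twist at $v_1$ effects $n_1 \to n_1 - 1$ (or the analogous simultaneous shift for the oppositely oriented identity). Since both winding numbers shift by the same amount, $n_0 - n_1$ is again preserved. I would verify this by reading off the local winding contributions directly from the figure, just as in the topological argument of Remark \ref{rk:coframing_extension}.

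I expect the main obstacle to be the $R4$ and ambient-isotopy step rather than the local Reidemeister moves. The delicate issue is that winding number around a point in $S^2$ is not a priori a well-defined integer (there is no canonical $\mathbb{R}^2$), so I must argue carefully that \emph{choosing} a point at infinity and measuring both $n_0$ and $n_1$ against it makes $n_0 - n_1$ independent of that choice, and that any strand-sweep around the back of the sphere is precisely a change of this choice affecting both winding numbers equally. The cleanest way to make this rigorous is to fix an auxiliary basepoint $\ast \in S^2 \setminus (K \cup \{v_0, v_1\})$, define $n_i$ as the winding of $K$ around $v_i$ in the plane $S^2 \setminus \{\ast\} \cong \mathbb{R}^2$, and then show that moving $\ast$ across a strand changes $n_0$ and $n_1$ by the same signed count of crossings, so that the difference is a genuine invariant of the biframed knotoid.
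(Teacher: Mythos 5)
Your proposal is correct and follows essentially the same route as the paper: invariance under the local moves is immediate, the coframing identities shift $n_0$ and $n_1$ by the same amount, and $R4$ shifts both winding numbers by one full turn (the paper computes this as the replacement of a subtended angle $\alpha_i$ by $-(2\pi-\alpha_i)$, which is the angle-level version of your "encircles both basepoints" argument), so the difference $n_0-n_1$ survives in every case. Your extra care about the choice of point at infinity on $S^2$ is a sound refinement of the same idea rather than a different proof.
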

\begin{proof}
The only moves for which invariance is not immediate are the non-local moves, i.e.~$R4$ and the coframing identities. The coframing is defined to be invariant under the coframing identities, namely these identities respectively change both $n_0$ and $n_1$ by $\pm1$, and so these changes cancel in the definition of $\text{cofr}(K)$. Next say that $R4$ is applied to an arc that traverses $\alpha_i$ radians around $v_i$. Then the new arc traverses $-(2\pi-\alpha_i)$ radians around $v_i$. Noting that
\[
    -(2\pi-\alpha_0) + (2\pi-\alpha_1) = \alpha_0-\alpha_1,
\]
we see that these differences again cancel in the definition of $\text{cofr}(K)$.
\end{proof}

\begin{lemma}\label{lm:biframedknotoidbijection}
There is a bijection
\begin{align}
    \{ \text{Biframed knotoids} \} &\leftrightarrow \{ \text{Knotoids} \}\times\mathbb{Z}^2 \label{eq:biframedknotoidbijection}\\
    K &\mapsto \left(K,\text{writhe}(K),\text{cofr}(K)\right) \nonumber.
\end{align}
\end{lemma}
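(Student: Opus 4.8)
The plan is to imitate the proofs of Lemmas \ref{lm:framedknotoidbijection} and \ref{lm:biframedthetabijection}, treating the writhe and the coframing as the two independent integers. The map is well-defined because both $\text{writhe}(K)$ and $\text{cofr}(K)$ are biframed knotoid invariants: the former by invariance of the writhe under $R1',R2,R3$, the latter by the invariance of the coframing established above. For \emph{surjectivity}, fix a knotoid $\kappa$ and integers $f,c$, and choose a diagram $K_0$ of $\kappa$ with endpoints at $v_0,v_1$ whose tangents agree with $L$ there (arranged by a small isotopy near the endpoints). I would then adjust the two framings separately. Inserting $R1$ loops in the interior changes the writhe by $\pm1$ without affecting the winding numbers $n_0,n_1$, since a small kink does not enclose an endpoint. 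Inserting a \textbf{coframing curl} — replacing the straight germ of the strand at $v_0$ by an embedded spiral making one extra turn around $v_0$ — changes $n_0$ by $\pm1$ without introducing any crossing, hence without changing the writhe. Neither operation alters the underlying knotoid, because in $\{$Knotoids$\}$ the $R1$ loops are removable and the spiral is undone by sliding the free endpoint $v_0$ back out along it. Applying these two moves the appropriate number of times yields a diagram with underlying knotoid $\kappa$, writhe $f$, and coframing $c$.

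For \emph{injectivity}, suppose $K,K'$ are biframed knotoid diagrams with the same underlying knotoid and with $\text{writhe}(K)=\text{writhe}(K')$ and $\text{cofr}(K)=\text{cofr}(K')$. I would first invoke Lemma \ref{lm:framedknotoidbijection}: since the underlying knotoids and writhes agree, $K$ and $K'$ are equivalent as \emph{framed} knotoids, so there is a sequence from $K'$ to $K$ using only $R1',R2,R3$ and ambient isotopies of $S^2$, with the endpoints now allowed to move. All of these are already permitted for biframed knotoids \emph{except} the isotopies that move $v_0$ or $v_1$. The whole difficulty is therefore concentrated in converting these endpoint motions into the allowed moves: isotopies supported away from $v_0,v_1$, the spherical move $R4$, and the coframing identities of Figure \ref{fig:coframing_identities}.

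To do this I would track the winding numbers $(n_0,n_1)$ along the sequence. The moves $R1',R2,R3$ and the isotopies supported away from the endpoints fix $(n_0,n_1)$, while $R4$ preserves the difference $n_0-n_1$, exactly as in the proof that the coframing is well-defined. The composite of the endpoint-moving isotopies returns $v_0,v_1$ to themselves and acts, modulo isotopies away from the endpoints and $R4$ sweeps, as a product of coframing curls about $v_0$ and $v_1$ — these are precisely the generators obtained by twisting a neighbourhood of a marked point. Writing $\Delta n_i = n_i(K)-n_i(K')$, the equality of coframings gives $\Delta n_0=\Delta n_1\eqdef C$. After cancelling oppositely oriented curls at each endpoint by isotopies away from that endpoint, we are left with $C$ curls of one sign at $v_0$ and $C$ of the matching sign at $v_1$. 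Each curl at $v_0$ is then paired with a curl at $v_1$ to form a single coframing identity (Figures \ref{fig:coframing_welldefinedness} and \ref{fig:coframing_identities}), which \emph{is} an allowed biframed move. This exhausts all the curls, so the entire sequence is rewritten using only biframed moves, giving $K\cong K'$ as biframed knotoids.

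I expect the main obstacle to be the middle step of the injectivity argument: justifying rigorously that an endpoint-returning isotopy of $S^2$ decomposes, up to isotopies away from the endpoints and $R4$, into coframing curls. This is a statement about the isotopy group of $S^2$ relative to the two framed marked points $v_0,v_1$, whose nontrivial content — the twists about each marked point together with the sweep across the back of the sphere — is precisely captured by the coframing curls and $R4$. Making this precise, and checking that the curls interact correctly with the framing loops collected in the first step (they do, since curls are crossing-free and hence writhe-neutral, so the framing and coframing bookkeeping never interfere), is where the care is needed.
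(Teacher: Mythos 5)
Your proposal is correct and follows essentially the same route as the paper: surjectivity by adjusting the writhe with $R1$ loops and the coframing with crossing-free loops around an endpoint, and injectivity by reducing to the framed bijection of Lemma \ref{lm:framedknotoidbijection} and then cancelling the leftover endpoint loops via the coframing identities and the cancellation of opposite coframing loops. The paper's own proof is only a few lines and silently assumes the step you flag as the main obstacle (that an endpoint-returning isotopy decomposes into coframing curls modulo isotopies away from the endpoints and $R4$), so your write-up is, if anything, a more careful version of the same argument.
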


\begin{proof}
Clearly $n_0$, $n_1$, and hence the coframing of a biframed knotoid, can be adjusted at will by adding loops around $v_0$ as needed. Combined with bijection \eqref{eq:framedknotoidbijection}, this gives surjectivity of assignment \eqref{eq:biframedknotoidbijection}. Injectivity follows from bijection \eqref{eq:framedknotoidbijection}, and reasoning analogous to the proof of bijection \eqref{eq:framedknotoidbijection} for the coframing: in short, an equivalence $K\simeq K'$ as knotoids can be extended to an equivalence of biframed knotoids, up to loops around $v_0,v_1$ which must cancel via Figures \ref{fig:coframing_identities} and \ref{fig:cancelling_coframing} if we assume $\text{cofr}(K)=\text{cofr}(K')$. Thus assignment \eqref{eq:biframedknotoidbijection} is a bijection.
\end{proof}

\begin{figure}[ht]
    \centering
    \includegraphics[width=.7\linewidth]{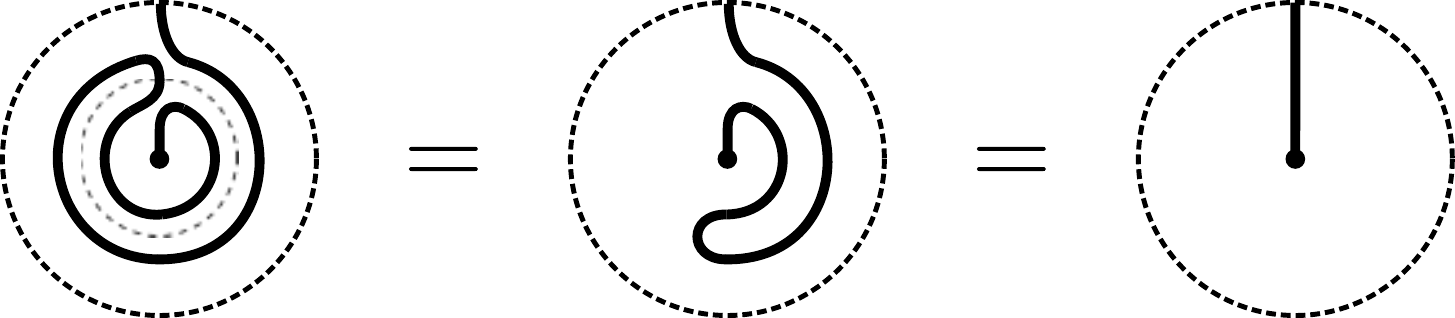}
    \caption{Cancellation of opposite coframing loops.}
    \label{fig:cancelling_coframing}
\end{figure}

Combined with bijection \eqref{eq:biframedthetabijection} and Theorem \ref{thm:knotoidbijection} this results in a geometric realization result for biframed knotoids analogous to bijection \eqref{eq:framedrealization}; namely a bijection
\begin{equation}\label{eq:biframedrealizaiton}
    \{\text{Biframed knotoids}\} \leftrightarrow \{\text{Biframed simple theta-curves}\},
\end{equation}
casting $\lvert\Theta\rvert$ as the geometric realization of biframed knotoids.

As an extension to Remark \ref{rk:bijection}, bijection \eqref{eq:biframedrealizaiton} also has a topological realization analogous to the `projection' $\theta\mapsto e_0$ from Figure \ref{fig:bijection}. The respective definitions of framing for diagrams and theta-curves translate to each other under this projection via the blackboard framing as before, and the respective definitions of coframing are set up so that they translate to each other immediately under this projection.

\subsubsection{Half-integer Coframing}

Our original motivation for introducing biframed theta-curves was finding a way of adding a `half-twist' to a framed knotoid in such a way that can be seen easily in the corresponding knotoid diagrams. Here we discuss how biframed knotoids provide this feature by extending the discussion to non-integer coframing.\\

Biframed knotoids with half-integer coframing are obtained simply by weakening the condition of simplicity of a biframed theta-curves by removing the requirement that the coframing be integer. This adds the possibility of theta-curves with $\theta(\overline{e}_0)$ attached to $\theta(\overline{e}_-\cup\overline{e}_+)$ at the \textit{back}. See Figure \ref{fig:halfinteger}. 

\begin{figure}[ht]
    \centering
    \includegraphics[width=.5\linewidth]{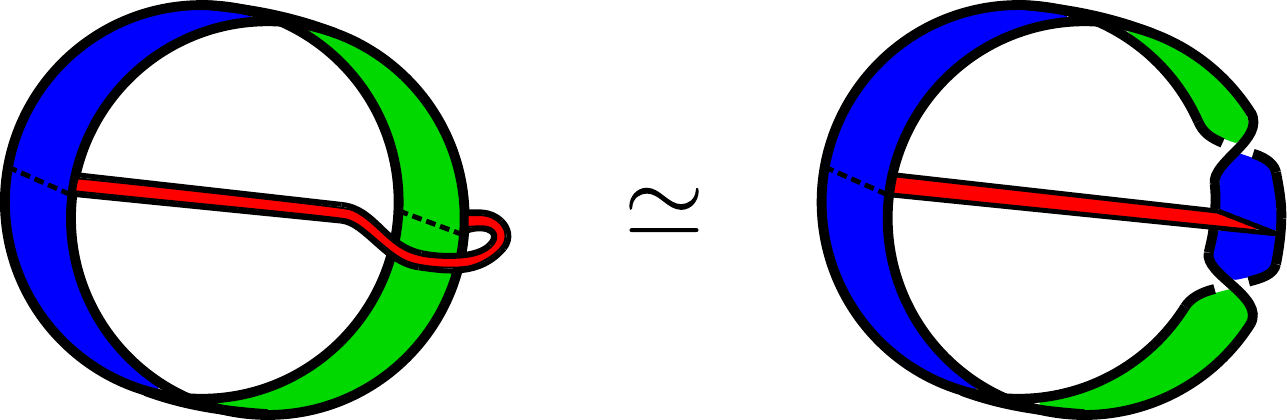}
    \caption{A standard biframed theta-curve with half-integer coframing.}
    \label{fig:halfinteger}
\end{figure}

Such an attachment at the back can occur at either end of $\theta(\overline{e}_0)$, and we can translate between these two cases via the isotopies shown in Figure \ref{fig:halfint_welldefinedness}. These same isotopies show that a biframed theta-curve with two half-integer coframing attachments is nothing but an integer-coframing theta-curve, as the terminology `half-integer' suggests.

\begin{figure}[ht]
    \centering
    \includegraphics[width=\linewidth]{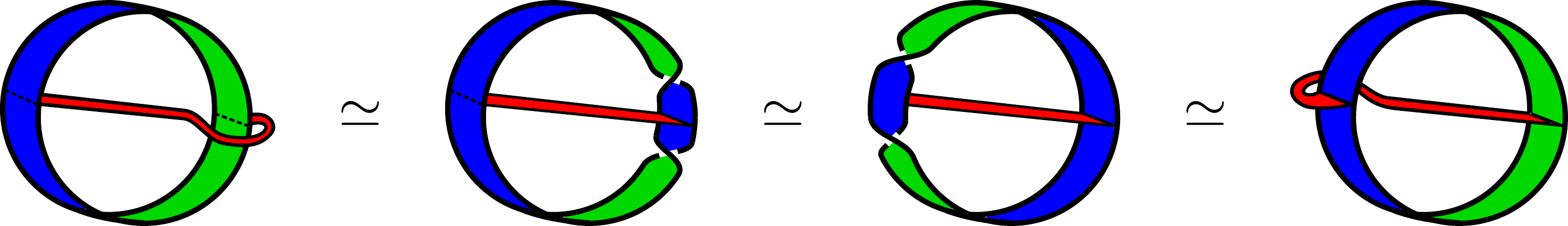}
    \caption{Translating a half-integer coframing attachment from $a_1$ to $a_0$. (A re-coloring occurs in the last equivalence.)}
    \label{fig:halfint_welldefinedness}
\end{figure}

The coframing of a half-integer coframing theta-curve is defined analogously as for the integer-coframing case, i.e.~by putting the theta-curve in standard form and subtracting the turning numbers of $\theta(\overline{e}_0)$ around its attachments to $\theta(\overline{e}_-\cup\overline{e}_+)$. 

Half-integer coframing manifests in biframed knotoid diagrams as the possibility of the tangent vectors at the end-points to be \textit{opposite} to $L$. See Figure \ref{fig:halfint_diagram}. (Again, the coframing in this case is defined as a difference of turning numbers, exactly as for integer-coframing knotoid diagrams.)

\begin{figure}[ht]
    \centering
    \includegraphics[width=.7\linewidth]{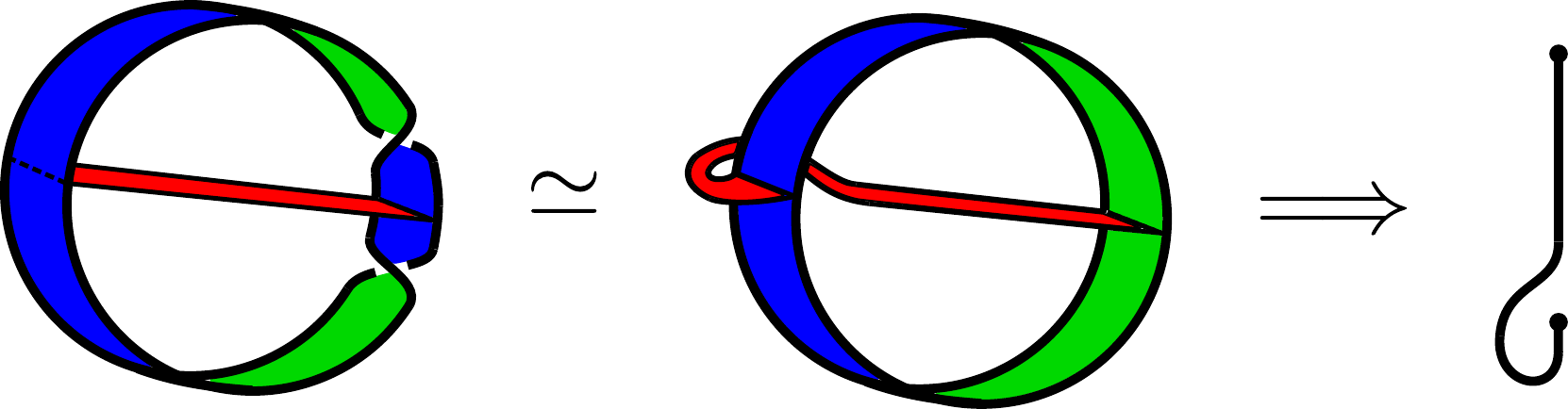}
    \caption{A $(0,-1/2)$-biframed trivial knotoid diagram.}
    \label{fig:halfint_diagram}
\end{figure}

To properly define half-integer coframing knotoids, then, we need to add coframing identities encoding the moves in Figure \ref{fig:halfint_welldefinedness} and similar such moves.

\begin{figure}[ht]
    \centering
    \includegraphics[width=\linewidth]{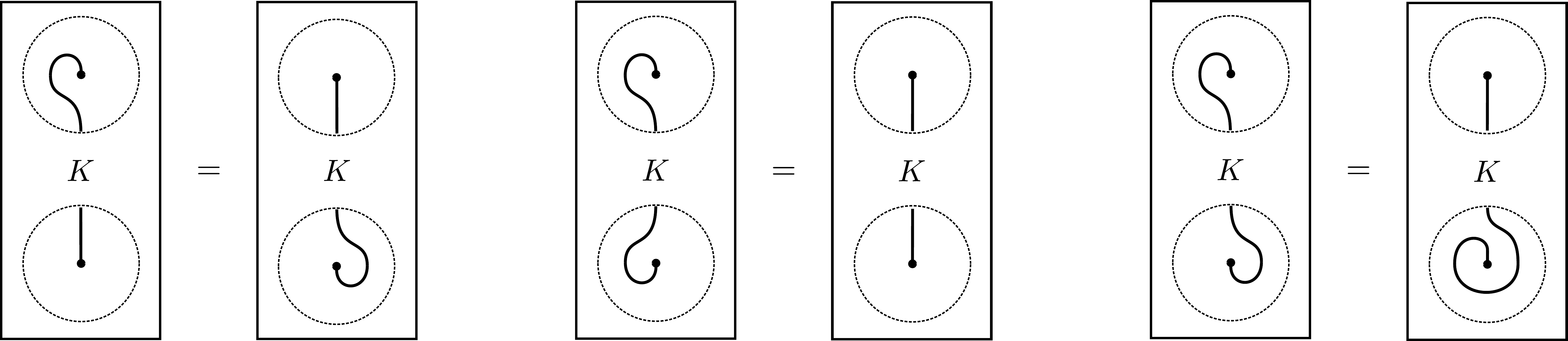}
    \caption{Additional coframing identities for half-integer coframing knotoids, for coframing $+\frac{1}{2}$, $0$, and $+1$ respectively. Here $K$ denotes the rest of a knotoid diagram as before.}
    \label{fig:halfint_coframing_ids}
\end{figure}

There are a total of 6 coframing identities that need to be added; the three identities involving a positive half-integer coframing attachment at the leg are depicted in Figure \ref{fig:halfint_coframing_ids}.


\section{Quantum Knotoid Invariants}\label{sec:invariants}

In this section we begin by recalling the construction of quantum knot invariants, specifically Reshetikhin-Turaev invariants, and then give several generalizations of such invariants to the setting of knotoids using framed and biframed knotoid diagrams. One such generalization relies on a slight modification to the setup of quantum invariants, moving to the setting of `braided groups'. Hence we first recall this setup very briefly.

\subsection{Reshetikhin-Turaev Invariants}

We follow standard notation from \cite{majidprimer}, where $\mathcal{C}$ is a category, $\otimes$ provides a monoidal structure on $\mathcal{C}$ with unit $k$, $(\Psi,\nu)$ further provide a ribbon braided structure on $\mathcal{C}$, and $(\cdot)^*$ is a dualization operation on $\mathcal{C}$ that turns it into a rigid category via $(\text{ev},\text{coev})$. Recall that quantum knot invariants are constructed by fixing a rigid ribbon category $(\mathcal{C},\otimes,k,\Psi,(\cdot)^*,\text{ev},\text{coev},\nu)$, picking an object $V$ of $\mathcal{C}$, and mapping a knot diagram to an element of $\Hom_{\mathcal{C}}(k,k)$ via the assignment specified on elementary tangles by Figure \ref{fig:braided_elements}. This assignment, call it $\varphi$, is extended to arbitrary tangle diagrams via the rules $\varphi(T_1\otimes T_2)=\varphi(T_1)\otimes \varphi(T_2)$ and $\varphi(T_1\circ T_2)=\varphi(T_1)\circ\varphi(T_2)$, where the tensor product and composition of tangles denote their horizontal and vertical juxtapositions respectively. For a more detailed discussion see \cite{jackson}.

\begin{figure}[ht]
    \centering
    \includegraphics[width=\linewidth]{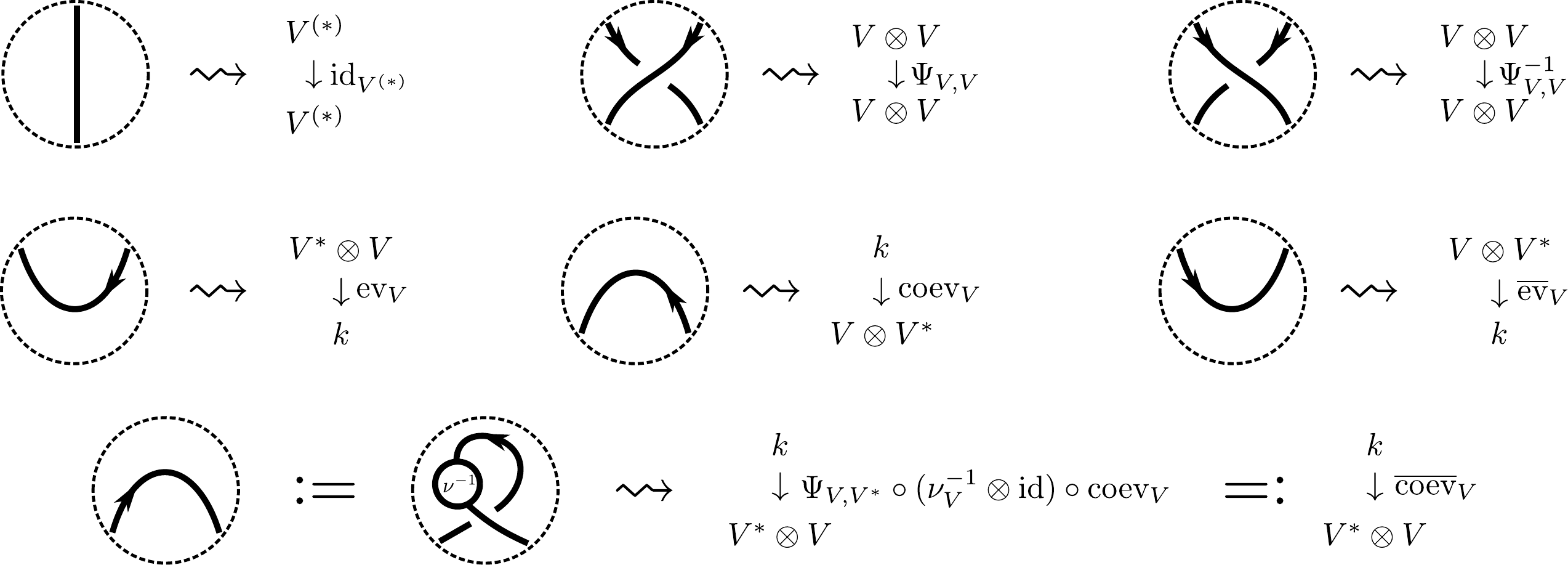}
    \caption{The assignment $\varphi$ on elementary tangles.}
    \label{fig:braided_elements}
\end{figure}

Here $\overline{\text{ev}}_V$ and $\overline{\text{coev}}_V$ are defined using the ribbon element $\nu_V^{-1}$, as is detailed for $\overline{\text{coev}}_V$. Our convention is that knot diagrams are read top to bottom, with downward strands represented by $V$ and upward strands by $V^*$. More generally for crossings that involve $V^*$, $\Psi$ is assigned to positively oriented crossings and $\Psi^{-1}$ to negative ones.

The assignment $\varphi$ is well-defined on framed tangles, and therefore provides an invariant of oriented framed knots for any choice $(\mathcal{C},V)$ when applied to a knot diagram viewed as a tangle with no in- or out-going strands. Such an invariant is computed by selecting a `\textit{Morse decomposition}' of the knot; dividing it into a composition of tangles, each of which is a finite tensor product of elementary tangles.

For our purposes here, we will wish to extend $\varphi$ to the setting of tangle diagrams decorated with splittings and mergings of strands. To this end, consider a Hopf algebra $(H,\cdot,\Delta,\eta,\epsilon,S)$, following standard notation from \cite{majidfoundations}. Then in analogy with $\varphi$ we can represent morphisms $H^{\otimes n}\to H^{\otimes m}$ as tangles decorated with splittings, mergings, and end-points as in Figure \ref{fig:hopf_elements}.

\begin{figure}[ht]
    \centering
    \includegraphics[width=.8\linewidth]{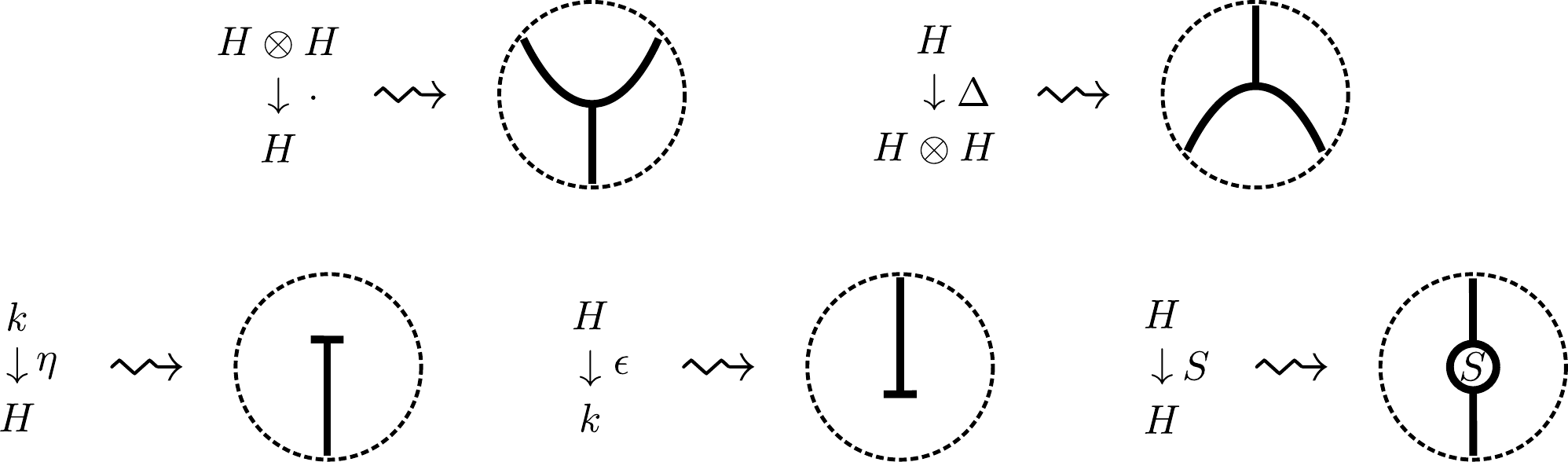}
    \caption{Diagrammatic depiction of structure morphisms of $H$.}
    \label{fig:hopf_elements}
\end{figure}

To illustrate this, the resulting diagrammatic identities representing the axioms of a Hopf algebra are depicted in Figure \ref{fig:hopf_axioms}.

\begin{figure}[ht]
    \centering
    \includegraphics[width=\linewidth]{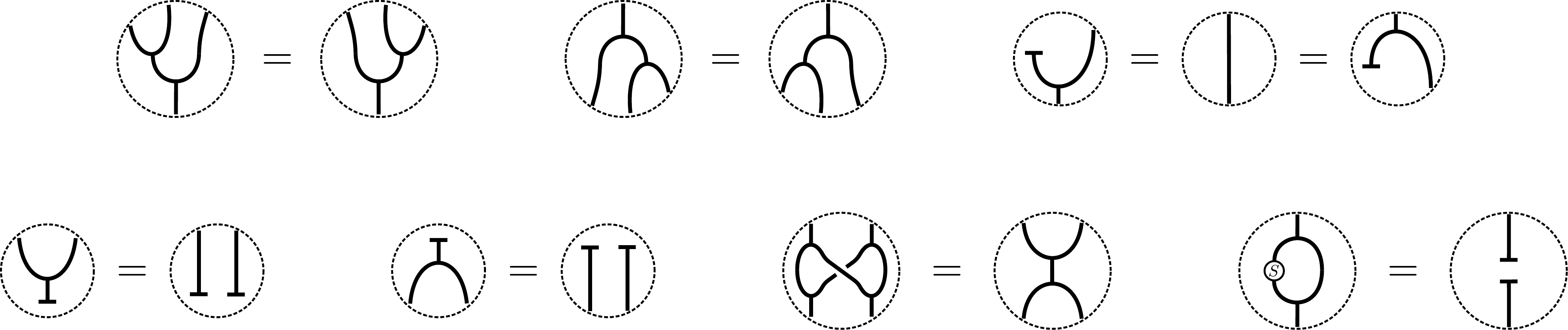}
    \caption{The axioms of a Hopf algebra in diagrammatic notation.}
    \label{fig:hopf_axioms}
\end{figure}

Notice that the second-to-last axiom in Figure \ref{fig:hopf_axioms} contains a crossing. For Hopf algebras this represents the vector space isomorphism $\tau:V\otimes W\cong W\otimes V$, i.e.~the trivial braiding on the category $\texttt{Vect}_k$. The diagrammatic notation developed so far now suggests we define the following:

\begin{definition} \cite{majid1993beyond,majidfoundations}
Let $\mathcal{C}$ be a ribbon category. A \textbf{braided group} (or `Hopf algebra object of a ribbon category') is an object $B$ of $\mathcal{C}$ endowed with morphisms $\cdot:B\otimes B\to B$, $\Delta: B\to B\otimes B$, $\eta:k\to B$, $\epsilon: B\to k$, $S:B\to B$. These morphisms must be such that the axioms depicted in Figure \ref{fig:hopf_axioms} are satisfied when considered as equations of morphisms via Figures \ref{fig:braided_elements} and \ref{fig:hopf_elements}. This means the crossing in Figure \ref{fig:hopf_axioms} is now taken to represent $\Psi$ instead of $\tau$, in accordance with Figure \ref{fig:braided_elements}.
\end{definition}

If $B$ is a braided group in $\mathcal{C}$, then the inverse of Figure \ref{fig:hopf_elements} provides an extension of $\varphi$ to tangle diagrams decorated with splittings, mergings, and end-points. This extension is well-defined on decorated framed tangles because $\varphi$ is, and because we can pull the decorations under and over crossings by naturality of $\Psi,\Psi^{-1}$.

Finally recall that to produce explicit Reshetikhin-Turaev invariants one usually picks a ribbon Hopf algebra $H$ and picks $\mathcal{C}={}_H\mathcal{M}_F$, the category of finite-dimensional $H$-modules. By the representation theory of Hopf algebras this is a rigid ribbon category \cite[~Ch.~9]{majidfoundations}. A closely related choice is $\mathcal{C}={}_H^H\mathcal{M}_F$, the category of finite-dimensional crossed $H$-modules, which is equivalent to ${}_{D(H)}\mathcal{M}_F$ for $H$ finite-dimensional where $D(H)$ is the Drinfeld double of $H$ \cite[~Ch.~10]{majidprimer}.

\subsection{Group-Based Invariants}

In this subsection we generalize quantum invariants resulting from seeing a finite-dimensional Hopf algebra $H$ as a crossed module in ${}_H^H\mathcal{M}_F={}_{D(H)}\mathcal{M}_F$. We restrict to $H=kG$ for $k$ a field and $G$ a finite group because it will give us an additional knotoid invariant for free, but the contents of this subsection generalize readily to arbitrary (finite-dimensional) $H$ as in \cite{vanderveen}.\\

For $H=kG$, we first describe an alternative way of computing the framed knot invariant associated to $H\in {}_H^H\mathcal{M}_F$. Call this invariant $V_G$. Note that $D(kG)=(kG)^*\otimes kG\cong k(G)\otimes kG$, where $k(G)$ is the group of functions $G\to k$, and multiplication in $D(kG)$ is given by
\begin{equation}\label{eq:DkGmultiplication}
    (\delta_a\otimes h)(\delta_b\otimes g)(x)= \delta_a\delta_{hbh^{-1}}\otimes hg
\end{equation}
where $g,h,a,b\in G$, and $\delta_a$ is the Kronecker delta function taking value 1 on $a$. Moreover $D(kG)$ has quasitriangular structure $\mathcal{R}$ given by
\[
    \mathcal{R} = \sum_{g\in G} (\delta_{g}\otimes e)\otimes (1\otimes g)
    \qquad\text{ and }\qquad
    \mathcal{R}^{-1} = \sum_{g\in G} (\delta_{g^{-1}}\otimes e)\otimes (1\otimes g).
\]
Here $e\in G$ is the group unit. For more details see \cite[~Ch.~7]{majidfoundations}. Note that each term of $\mathcal{R}\in D(H)\otimes D(H)$ consists of a `left part' $(\delta_g\otimes e)$ and a `right part' $(1\otimes g)$; similarly for $\mathcal{R}^{-1}$.\\

Now let $K$ be a framed knot. Present $K$ as an oriented long knot diagram. Then $V_G(K)$ is an element of $\Hom(kG,kG)\cong (kG)^*\otimes kG=D(kG)$. To compute this element we place at every crossing of $K$ a copy of $\mathcal{R}$ if the crossing is positive, or a copy of $\mathcal{R}^{-1}$ if it is negative. We then trace through the long knot diagram of $K$ following its orientation. Beginning with the unit $1\otimes e\in D(kG)$, every time we encounter a crossing we multiply (from the left) with the left part of $\mathcal{R}$ or $\mathcal{R}^{-1}$ if we are going under the crossing, or with the right part if we are going over. After the final crossing we sum over all indices of all crossings, and the result is equal to $V_G(K)$ \cite{vanderveen}. To obtain the knot invariant associated to this long knot invariant one takes the trace of $V_G(K)\in\Hom(kG,kG)$. The result is the cardinality of $\Hom(\pi_1(S^3-K),G)$ \cite[~Ch.~13]{majidprimer}, for reasons we shall soon discuss.\\

As we shall see in the next subsection, the general Reshetikhin-Turaev invariant construction is tricky to generalize to knotoids. However, the above algorithm for computing $V_G$ generalizes effortlessly to give a framed knotoid invariant:

\begin{definition}
Let $K$ be a diagram for a framed knotoid. Recall that all knotoids are implicitly oriented from leg to head. Define $V_G(K)\in D(H)$ by the same process as above, tracing through $K$ from leg to head.
\end{definition}

\begin{proposition}
The element $V_G(K)$ is a framed knotoid invariant. 
\end{proposition}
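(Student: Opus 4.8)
The plan is to show that $V_G(K)$ is unchanged under the three allowed moves on framed knotoid diagrams, namely $R1',R2,R3$, together with the planar isotopies (ambient isotopies of $S^2$ that fix the endpoints and do not involve the forbidden moves). Since $V_G$ is computed by reading off left/right parts of $\mathcal{R}^{\pm1}$ along the oriented arc from leg to head and multiplying them in $D(kG)$, invariance under planar isotopy that does not change the crossing data is immediate: the element depends only on the combinatorial sequence of over/under passages and their signs, not on the embedding of the arc in the plane. So the substance is invariance under the Reidemeister moves.

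The cleanest approach is to recognize $V_G$ as a restriction of the Reshetikhin--Turaev functor $\varphi$ discussed in the excerpt. First I would verify that the local rule (place $\mathcal{R}$ at positive crossings, $\mathcal{R}^{-1}$ at negative ones, multiply the left part when passing under and the right part when passing over) coincides with applying $\varphi$ to the knotoid diagram, read as a tangle from leg to head with the single strand labelled by the object $H=kG \in {}^H_H\mathcal{M}_F = {}_{D(kG)}\mathcal{M}_F$. Concretely, the value $\varphi(K)\in\Hom_{\mathcal{C}}(H,H)$ is, under the identification $\Hom(kG,kG)\cong D(kG)$, exactly the element produced by the tracing algorithm; this is the content of the cited computation in \cite{vanderveen} and \cite[Ch.~13]{majidprimer}. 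Once this identification is in place, invariance under $R1',R2,R3$ follows from the fact, recalled in the excerpt, that $\varphi$ is well-defined on framed tangles: $R2$ and $R3$ correspond to the quasitriangularity (Yang--Baxter) relations for $\mathcal{R}$, and $R1'$ is precisely the weakened first move under which the ribbon/framed functor is already known to be invariant. No endpoint-involving moves are allowed, so the two open ends never interact with a crossing in a way that would require closing up the strand.

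The key point requiring care, and the main obstacle, is that $V_G(K)$ is an \emph{open}-ended value living in $D(kG)$ rather than a scalar in $\Hom_{\mathcal{C}}(k,k)$: for honest knots one takes a trace at the end, but here the leg and head are left free. I therefore need to check that the invariance arguments for $\varphi$ go through \emph{before} closing the strand, i.e.\ that the local relations encoding $R1',R2,R3$ hold as identities of morphisms $H\to H$ (equivalently of elements of $D(kG)$) and not merely after taking a trace. This is in fact automatic because $\varphi$ respects composition and tensor product of tangles and each Reidemeister move is a \emph{local} replacement of one elementary tangle subdiagram by another with equal $\varphi$-value; gluing an invariant local identity into a fixed surrounding diagram preserves the global morphism by functoriality. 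I would make this explicit for each move: for $R2$ and $R3$, the equality $\varphi(\text{LHS})=\varphi(\text{RHS})$ as elements of $D(kG)$ reduces to the multiplicativity of splitting left/right parts combined with the $\mathcal{R}$-matrix identities displayed above; for $R1'$ one uses that the weakened move does not create or destroy a full kink and hence introduces no ribbon correction. Assembling these local identities establishes that $V_G$ descends to the equivalence classes defining framed knotoids.
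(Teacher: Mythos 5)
Your proposal is correct and its core is the same as the paper's first-line justification: the moves $R1',R2,R3$ are local, take place away from the endpoints, and the corresponding identities in $D(kG)$ are exactly those already known to make $V_G$ an invariant of framed \emph{long} knots, so they transfer verbatim to knotoid diagrams; isotopies and $R4$ do not change the Gauss data and are free. Two points of comparison are worth noting. First, the paper does not stop at the citation: it re-verifies $R1'$, $R2$, and $R3$ by hand inside $D(kG)$, and that computation reveals why the verification is not quite the ``glue a local identity into a fixed ambient diagram'' argument you describe. In the tracing algorithm the two contributions of a single crossing are \emph{not} adjacent factors of the product --- they are separated by everything the strand meets between its two visits to that crossing --- so checking $R3$ requires commuting a $\delta$-factor past an intervening word $w$ and tracking the resulting conjugation; functoriality of $\varphi$ on Morse decompositions does not literally apply here. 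Second, your identification of $V_G$ with the Reshetikhin--Turaev functor $\varphi$ applied to the knotoid read as a tangle is the one step I would not lean on: a knotoid's endpoints sit in the interior of the diagram, so a Morse reading forces in evaluation and coevaluation maps, and the paper explicitly warns in the following subsection that this is precisely where the naive extension of $\varphi$ to knotoids breaks down. Neither issue is fatal to your argument, since the invariance of the universal invariant on framed long knots is a citable fact and the local relations are endpoint-free, but the explicit $D(kG)$ computation is what makes the paper's proof self-contained.
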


\begin{proof}
Invariance under ambient isotopy and the spherical move $R4$ is immediate since these moves do not alter the crossing data of $K$. Thus it suffices to check invariance of $V_G(K)$ under $R1',R2,R3$. This follows from $V_G$ being an invariant when defined on framed (long) knots. For completeness, we also prove by hand that $V_G$ is a framed knotoid invariant below:

Since the oriented versions of $R1'$ do not change the signs of crossings, invariance of $V_G(K)$ under $R1'$ follows from
\[
    (\delta_g\otimes e)(1\otimes g) = \delta_g\otimes g = (1\otimes g)(\delta_g\otimes e)
    \hspace{0.5em}\text{ and }\hspace{0.5em}
    (\delta_{g^{-1}}\otimes e)(1\otimes g) = \delta_{g^{-1}}\otimes g = (1\otimes g)(\delta_{g^{-1}}\otimes e).
\]
The move $R2$ has four oriented versions. The move $R3$ has eight. For brevity we consider only one of each; see Figure \ref{fig:orientedmoves}. The proofs for the other orientations are analogous.
\begin{figure}[h!]
    \centering
    \includegraphics[width=.8\linewidth]{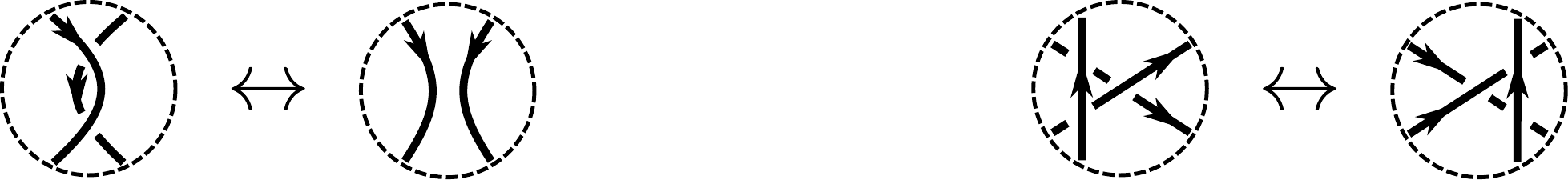}
    \caption{Examples of oriented $R2$ and $R3$ moves.}
    \label{fig:orientedmoves}
\end{figure}
On the left-hand side of the $R2$ move in Figure \ref{fig:orientedmoves} there are two crossings. For each orientation of $R2$, one is positive and the other negative. To find invariance we must show that the contributions from these crossings cancel. Indeed, the bottom arc gives a contribution
\[
    (\delta_{g^{-1}}\otimes e)(\delta_h\otimes e)
    =
    \begin{cases}
    \delta_h\otimes e &\quad \text{ if } g^{-1}=h,\\
    0 &\quad \text{ otherwise.}
    \end{cases}
\]
The top arc gives a contribution $(1\otimes g)(1\otimes h)$ which is thus $(1\otimes e)$ for nonzero contributions. Noting that $\sum_h(\delta_h\otimes e)=1\otimes e$ we see that the contribution of the bottom arc is also trivial, as required.

To verify invariance under the $R3$ move in Figure \ref{fig:orientedmoves}, we calculate the contributions of both sides and show they are equal. When running through $K$, suppose we pass through the vertical line first, then through the line going diagonally up, and finally through the strand going diagonally down (other orders are proven analogously). Then we compute $V_G(K)$ on the left-hand side to be
\begin{align*}
    V_G(K) &= \sum_{g,h,k,\dots}\dots(1\otimes g)(1\otimes h)\dots(\delta_{g^{-1}}\otimes e)(1\otimes k)\dots(\delta_{h^{-1}}\otimes e)(\delta_{k^{-1}}\otimes e)\\
    &= \sum_{g,k,\dots}\dots(1\otimes g)(1\otimes k)\dots(\delta_{g^{-1}}\otimes e)(1\otimes k)\dots(\delta_{k^{-1}}\otimes e).
\end{align*}
Here the $(\dots)$ are generic placeholder for contributions due to the rest of $K$. Now we drag the term $(\delta_{k^{-1}}\otimes e)$ towards the middle contribution. Every time an element of the form $(1\otimes x)$ is encountered, the argument of $\delta_{k^{-1}}$ is conjugated by $x^{-1}$. Thus we find
\begin{align*}
    V_G(K) &= \sum_{g,k,\dots}\dots(1\otimes g)(1\otimes k)\dots(\delta_{g^{-1}}\otimes e)(1\otimes k)(\delta_{wk^{-1}w^{-1}}\otimes e)\dots\\
    &= \sum_{g,k,\dots}\dots(1\otimes g)(1\otimes k)\dots(\delta_{g^{-1}}\otimes e)(\delta_{kwk^{-1}w^{-1}k^{-1}}\otimes e)(1\otimes k)\dots,
\end{align*}
for some word $w$ on $G$. This implies that $g^{-1}=kwk^{-1}w^{-1}k^{-1}$. Thus we conclude
\[
    V_G(K) = \sum_{k,\dots} \dots(1\otimes kwkw^{-1})\dots(\delta_{kwk^{-1}w^{-1}k^{-1}}\otimes k)\dots
\]
Following the same procedure on the right-hand side (with the labels $g,h,k$ attached to the same crossings as before) we find
\begin{align*}
    V_G(K) &= \sum_{g,k,\dots}\dots(1\otimes k)(1\otimes g)\dots(1\otimes k)(\delta_{g^{-1}}\otimes e)\dots(\delta_{k^{-1}}\otimes e)\\
    &= \sum_{g,k,\dots}\dots(1\otimes k)(1\otimes g)\dots(1\otimes k)(\delta_{g^{-1}}\otimes e)(\delta_{wk^{-1}w^{-1}}\otimes e)\dots\\
    &= \sum_{k,\dots} \dots(1\otimes kwkw^{-1})\dots (1\otimes k)(\delta_{wk^{-1}w^{-1}}\otimes e)\dots\\
    &= \sum_{k,\dots} \dots(1\otimes kwkw^{-1})\dots(\delta_{kwk^{-1}w^{-1}k^{-1}}\otimes k)\dots.
\end{align*}
(Note that $w$ here is the same word as in the left-hand side computation!) We thus conclude that $V_G$ is indeed invariant under $R3$, completing the proof.
\end{proof}

\begin{example}\label{ex:knotoidgroup}
Let $K$ be the framed knotoid depicted in Figure \ref{fig:VGexample}.
\begin{figure}[ht]
    \centering
    \includegraphics[width=.45\linewidth]{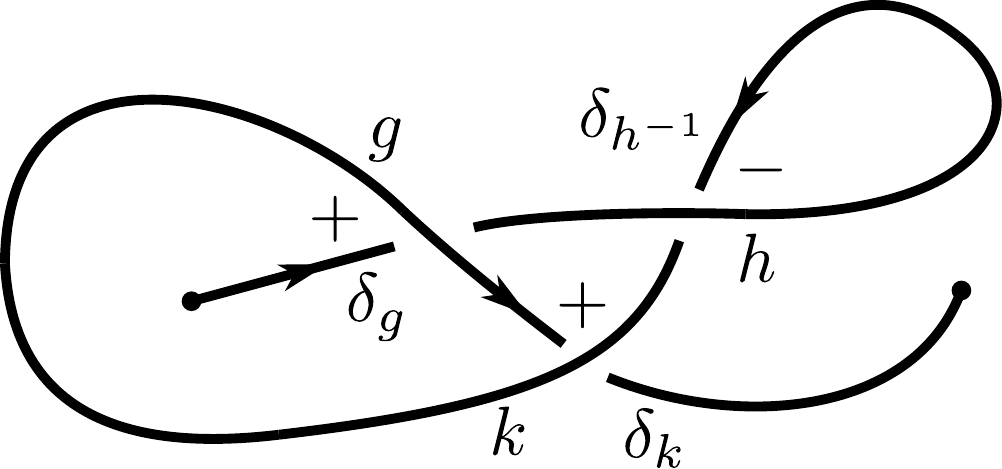}
    \caption{Construction of $V_G$ for an example framed knotoid.}
    \label{fig:VGexample}
\end{figure}
The signs of the crossings of $K$ are indicated in Figure \ref{fig:VGexample}, as are the nontrivial parts of $\mathcal{R},\mathcal{R}^{-1}$ that occur in the construction of $V_G$. Using the definition of $V_G$ and Equation \eqref{eq:DkGmultiplication} we compute:
\begin{align*}
    V_G(K) &= \sum_{g,h,k\in G} (\delta_g \otimes e)(1\otimes h)(\delta_{h^{-1}}\otimes e)(1\otimes k)(1\otimes g)(\delta_k\otimes e)\\
    &= \sum_{g,h,k\in G} \delta_g\delta_{h^{-1}} \delta_{hkgkg^{-1}k^{-1}h^{-1}} \otimes hkg\\
    &= \sum_{g,k\in G} \delta_g\delta_{g^{-1}kgkg^{-1}k^{-1}g} \otimes g^{-1}kg\\
    &= \sum_{g,k\in G} \delta_{gkg}\delta_{kgk} \otimes g^{-1}kg.
\end{align*}
Here the last two equalities use that $\delta_a\delta_b\neq 0\iff a=b$. This also means that a choice of $(g,k)$ only gives a nonzero contribution to $V_G(K)$ if $gkg=kgk$. In other words, we only obtain a contribution to $V_G(K)$ if $B_3\to G:(x,y)\mapsto(g,k)$ constitutes a group homomorphism, where
\[
    B_3 = \langle x,y\,\vert\, xyx=yxy \rangle
\]
is the braid group on $3$ strands.
\end{example}






At the beginning of this subsection we stated that taking $H=kG$ gives us an extra knotoid invariant for free. Indeed, this follows from the relation between $V_G$ and the Wirtinger presentation of the knot group $\pi_1(S^3-K)$. See \cite[~Ch.~3]{rolfsen} for more on the Wirtinger presentation. Namely, if $K$ is a knot presented as a long knot diagram then after some simplification $V_G(K)$ is of the form
\[
    V_G(K) = \sum_{g,h,\dots} \delta_{\dots} \delta_{\dots} \dots \delta_{\dots} \otimes \dots.
\]
Here the subscripts of the $\delta$'s are words in $(g,h,\dots)$. We only get a non-zero contribution to $V_G$ for $g,h,\dots$ such that all these words coincide. The multiplication in $D(H)$ is set up precisely so that the relations corresponding to coincidence of these words are exactly the relations of the Wirtinger presentation. Indeed: if we encounter a term in $V_G$ such as
\[
    (\delta_k\otimes e)(1\otimes g)(\delta_h\otimes e),
\]
then to arrive at the reduced form for $V_G$ we must pull the $\delta_h$ term to the left. The multiplication in $D(H)$ prescribes that this term equals
\[
    (\delta_k\otimes e)(\delta_{ghg^{-1}}\otimes g) = \delta_k\delta_{ghg^{-1}} \otimes g.
\]
This conjugation by $g$ reflects exactly the action of going under the over-lying $g$-strand and back again in order to relate $h$ to $k$, which is precisely the setup of the Wirtinger presentation.

In conclusion, the invariant $V_G$ gives us an alternative way of computing the Wirtinger presentation. This implies that nonzero contributions to $V_G(K)$ correspond to homomorphisms $\pi_1(S^3-K)\to G$. Both of these results are well-known \cite{bar2019polynomial}. The novelty is that we have extended $V_G$ to framed knotoids, and so this interpretation of $V_G$ allows us to associate a group to knotoids.

\begin{definition}\label{def:knotoidgroup}
Let $K$ be a knotoid diagram with $n$ crossings. Suppose $V_G(K)$ is of the form
\[
    V_G(K) = \sum_{g_1,g_2,\dots,g_n} \delta_{w_1}\delta_{w_2}\dots\delta_{w_n} \otimes w_{n+1}
\]
where each $w_i$ is a word in $g_1,\dots g_n$. We define the \textbf{knotoid group} $W(K)$ of $K$ by the presentation:
\[
    W(K) = \langle g_1,g_2,\dots g_n \vert w_1=w_2=\dots=w_n \rangle.
\]
\end{definition}

\begin{remark}
One might expect the knotoid group $W(K)$ to be isomorphic to the fundamental group of $S^3$ with an arc that projects to give $K$ removed, in which case it is isomorphic to $S^3$ with an arc removed and hence trivial. Instead $W(K)$ is a group defined by the combinatorial information of $K$ and hence not necessarily related to any fundamental group. We shall see below that it is nontrivial in general. A knotoid group was also defined by Turaev directly via the Wirtinger presentation \cite{turaev2012knotoids}. Turaev's knotoid group, which I shall denote here by $W'(K)$, has one generator more than $W(K)$, and in \cite{turaev2012knotoids} it is shown that $W'(K)\cong\pi_1(S^3-K_-)$ where $K_-$ is the knot formed from a knotoid $K$ by connecting the endpoints via a strand that runs under all of $K$. We shall see in Example \ref{ex:W(K)} below that this last fact does not hold for $W(K)$, and hence $W(K)\ncong W'(K)$ in general.
\end{remark}

Since $V_G$ is a framed knotoid invariant, invariance of $W(K)$ under $R1',R2,R3$ is immediate. Note however that Definition \ref{def:knotoidgroup} makes no reference to framing. This is justified by the following lemma:

\begin{lemma}
The knotoid group $W(K)$ is $R1$-invariant.
\end{lemma}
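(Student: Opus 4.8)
The plan is to reduce the statement to a single local claim --- that adjoining one curl to a diagram leaves $W(K)$ unchanged --- and then to show that this adjunction effects nothing more than a Tietze transformation on the presentation of Definition \ref{def:knotoidgroup}. First I would record the reduction: the text already grants that $W(K)$ is invariant under $R1',R2,R3$ (inherited from the invariance of $V_G$), so the only additional content of the full move $R1$ is the creation or deletion of a single curl as in Figure \ref{fig:weakreid}. Creation and deletion are mutually inverse, so it suffices to show that creating one positive or negative curl does not change the isomorphism type of $W(K)$.

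Next comes the local computation. Insert a copy of $\mathcal{R}^{\pm 1}$ at the new crossing; because the two strands of a curl are traversed consecutively, the inserted factors are adjacent and collapse, via the very identity $(\delta_c\otimes e)(1\otimes c)=\delta_c\otimes c=(1\otimes c)(\delta_c\otimes e)$ already used for $R1'$, to $\delta_{c^{\pm 1}}\otimes c$, where $c$ is the new summation label. Hence the new diagram carries one extra generator $c$; its decoration word $w_{n+1}$ gains one letter $c$ at the position of the curl; and one new relation word appears. Writing $U$ for the ordered product of over-labels traversed before the curl and $m$ for the common value $w_1=\cdots=w_n$ of the other relation words, the new word is $\delta_{U c^{\pm 1}U^{-1}}$, so the new relation reads $U c^{\pm 1}U^{-1}=m$ by Equation \eqref{eq:DkGmultiplication}.

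The third step is the Tietze reduction. Solving the new relation gives $c=U^{-1}m^{\pm 1}U$, a word in the old generators, so $c$ may be eliminated. The point requiring care is that inserting $c$ into the decoration also inserts it into the conjugating prefix of every relation word arising from a crossing traversed after the curl: such a word passes from $w_i=U_i g_i^{\pm}U_i^{-1}$ to $\tilde w_i=(U c U_i'')\,g_i^{\pm}\,(U c U_i'')^{-1}$, where $U_i=U\,U_i''$. Substituting $c=U^{-1}m^{\pm 1}U$ collapses the prefix $UcU_i''$ to $m^{\pm 1}U_i$, so that $\tilde w_i$ becomes $m^{\pm 1}w_i m^{\mp 1}$; the relation $\tilde w_i=m$ therefore reduces to $w_i=m$, exactly the original relation. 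Thus after eliminating $c$ every relation is restored, the presentation is carried to that of $W(K)$ by Tietze moves, and the two groups are isomorphic.

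The main obstacle is precisely this last bookkeeping: a priori the new over-label $c$ contaminates the relations of all subsequent crossings, so it is not obvious that the group is unaffected. The resolution is that these contaminations appear uniformly as conjugation by the common relation value $m$, which is absorbed the moment the relations are imposed --- this is what makes adjoining a curl a genuine Tietze transformation rather than an honest change of group. I would also remark that neither the order in which the two strands of the curl are traversed nor the sign of the curl affects the argument, both being handled by the single collapse identity above.
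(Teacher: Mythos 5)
Your argument is correct and is essentially the paper's own proof: both eliminate the new generator $c$ (the paper's $g$) by a Tietze transformation, solving the curl's relation $Uc^{\pm1}U^{-1}=m$ for $c$ and observing that the contamination of the other relation words amounts to conjugation by the common relation value $m$, which is absorbed once the relations are imposed. The only quibble is directional: with the paper's convention of multiplying from the left, the conjugating word $U$ (the paper's $\Omega$) consists of over-labels traversed \emph{after} the curl and the contaminated relations are those of crossings whose under-passage precedes it, but this reversal of bookkeeping is immaterial to the group-theoretic elimination.
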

\begin{proof}
Let \includegraphics[width=.7cm]{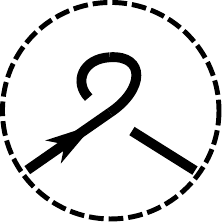} and \includegraphics[width=.7cm]{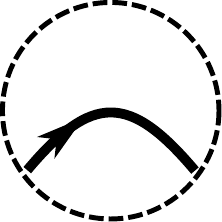} denote two knotoid diagrams for $K$ that are equal outside of the depicted discs. Let us write
\begin{align*}
    V_G\big( \raisebox{-.24cm}{\includegraphics[width=.7cm]{WKleft.pdf}} \big) 
    &= \sum_{g,g_1,g_2,\dots} \dots (1\otimes g)(\delta_{g^{-1}}\otimes e) \dots\\
    &= \sum_{\dots,g,g_1,\dots} \left( \delta_{\dots} \dots \delta_{\dots} \delta_{\Omega g^{-1}\Omega^{-1}} \delta_{\Omega g w_1 g_1 w_1^{-1} g^{-1} \Omega^{-1}} \delta_{\Omega g w_2 g_2 w_2^{-1} g^{-1} \Omega^{-1}}\dots \right) \otimes w'.
\end{align*}
Here $w',w_1,w_2,\dots$ are words in $g_1,g_2,\dots$, and $\Omega$ is the word by which the term $\delta_{g^{-1}}$ gets conjugated in the reduction of $V_G$. We want to show that the generator $g$ can be removed from the defining presentation of $W(K)$. Omitting the generators from the presentations, we compute
\begin{align*}
    W\big( \raisebox{-.24cm}{\includegraphics[width=.7cm]{WKleft.pdf}} \big)
    &= \langle \dots = \Omega g^{-1}\Omega^{-1} = \Omega g w_1 g_1 w_1^{-1} g^{-1} \Omega^{-1} = \Omega g w_2 g_2 w_2^{-1} g^{-1} \Omega^{-1} = \dots \rangle \\
    &= \langle \dots = \Omega g^{-1}\Omega^{-1}, \\
    & \qquad\qquad g^{-1} = g w_1 g_1 w_1^{-1} g^{-1} = g w_2 g_2 w_2^{-1} g^{-1} = \dots \rangle \\
    &= \langle \dots = \Omega g^{-1}\Omega^{-1}, \\
    & \qquad\qquad g^{-1} = w_1 g_1 w_1^{-1} = w_2 g_2 w_2^{-1} = \dots \rangle \\
    &= \langle \dots = \Omega w_1 g_1 w_1^{-1}\Omega^{-1}, \\
    & \qquad\qquad w_1 g_1 w_1^{-1} = w_2 g_2 w_2^{-1} = \dots \rangle \\
    &= \langle \dots = \Omega w_1g_1w_1^{-1}\Omega^{-1} = \Omega w_2g_2w_2^{-1}\Omega^{-1} = \dots \rangle
    = W\big( \raisebox{-.24cm}{\includegraphics[width=.7cm]{WKright.pdf}} \big),
\end{align*}
as required. The proof for the $R1$-relation of the other sign is analogous.
\end{proof}

\begin{example}\label{ex:W(K)}
For the knotoid diagram $K$ from Example \ref{ex:knotoidgroup}, we saw $W(K)=B_3$. Note that $K_-$ is equivalent to the unknot, and hence $W'(K)\cong \pi_1(S^3-K_-)\cong \mathbb{Z}$ where $W'(K)$ is the knotoid group defined in \cite{turaev2012knotoids}, so that $W'(K)\ncong W(K)$ in this case.
\end{example}

\begin{example}
As a more substantial example of computing $V_G$, we shall show that $V_G(K)$ can contain more information than the cardinality of $\text{Hom}(W(K),G)$. Indeed: let $K_1$ and $K_2$ be the oriented knotoids depicted in figure \ref{fig:5_5and24}. 

\begin{figure}[h!]
    \centering
    \includegraphics[width=.5\linewidth]{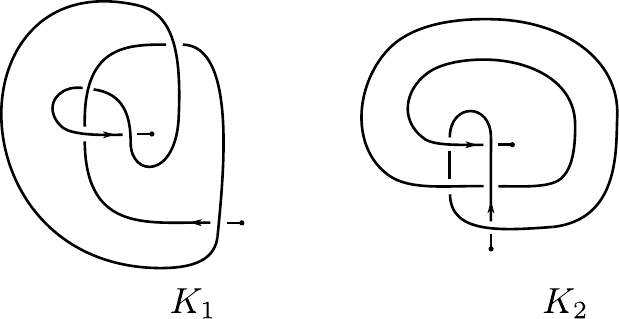}
    \caption{The knotoids $K_1$ and $K_2$, corresponding respectively to orientations of the knotoids $5_5$ and $5_{24}$ in the knotoid table from \cite{goundaroulis2019systematic}.}
    \label{fig:5_5and24}
\end{figure}

We compute that
\begin{align*}
    V_G(K_1) 
    &= \sum_{g,h,i,j,k\in G} (\delta_{g^{-1}}\otimes e) (\delta_{h}\otimes e) (1\otimes i) (\delta_{j}\otimes e) (1\otimes g)\\ 
    &\hspace{.2\linewidth}\cdot (1\otimes j) (1\otimes k) (\delta_{i}\otimes e) (1\otimes h) (\delta_{k^{-1}}\otimes e)\\
    &= \sum_{g,h,i,j,k\in G} \delta_{g^{-1}} \delta_{h} \delta_{iji^{-1}} \delta_{igjki(igjk)^{-1}} \delta_{igjkhk^{-1}(igjkh)^{-1}} \otimes igjkh.
\end{align*}
and
\begin{align*}
    V_G(K_2) 
    &= \sum_{g,h,i,j,k\in G} (\delta_{g}\otimes e) (1\otimes h) (1\otimes i) (\delta_{j^{-1}}\otimes e) (\delta_{k^{-1}}\otimes e)\\ 
    &\hspace{.2\linewidth}\cdot  (1\otimes g) (1\otimes k) (\delta_{h^{-1}}\otimes e) (1\otimes j) (\delta_{i^{-1}}\otimes e)\\
    &= \sum_{g,h,i,j,k\in G} \delta_{g} \delta_{hij^{-1}(hi)^{-1}} \delta_{hik^{-1}(hi)^{-1}} \delta_{higkh^{-1}(higk)^{-1}} \delta_{higkji^{-1}(higkj)^{-1}} \otimes higkj.
\end{align*}

Taking $G=S_3$, we find that both $V_{S_3}(K_1)$ and $V_{S_3}(K_2)$ have 12 nonzero contributions. More specifically, we find
\begin{align*}
    &V_{S_3}(K_1) = \delta_{e}\otimes e + \delta_{(1,3,2)}\otimes (1,3,2) + \delta_{(1,2,3)}\otimes (1,2,3) + \dots,\\
    &V_{S_3}(K_2) = \delta_{e}\otimes e + \delta_{(1,2,3)}\otimes e + \delta_{(1,3,2)}\otimes e + \dots,
\end{align*}
where the omitted terms in both expressions each consist of ten terms only involving the 2-cycles $(1,2),(1,3),(2,3)\in S_3$. From these computations we note that
\[
    \#\text{Hom}(W(K_1),S_3) = 12 = \#\text{Hom}(W(K_2),S_3),
\]
meaning that $\#\text{Hom}(W(\cdot),S_3)$ cannot distinguish $K_1$ and $K_2$. However, the expressions of $V_{S_3}(K_1)$ and $V_{S_3}(K_2)$ given above clearly show that $V_{S_3}(K_1)\neq V_{S_3}(K_2)$, which implies that $V_{S_3}(\cdot)$ can distinguish $K_1$ and $K_2$. We therefore conclude that $V_G(\cdot)$ can be a strictly stronger invariant than $\#\text{Hom}(W(\cdot),S_3)$, as claimed.

\end{example}

\subsection{Reshetikhin-Turaev Invariants of Knotoids}

There are two main obstructions to extending general Reshetikhin-Turaev invariants to framed knotoids. The first point is of course that of determining what to do with the end-points. The second obstacle is that for a (framed) knotoid diagram, one is able to swivel the tangent vector at the end-points to go, say, either up or down. Such swiveling affects the number of \raisebox{-.24cm}{\includegraphics[width=.7cm]{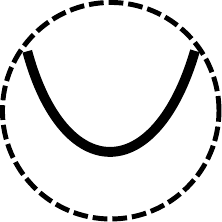}}'s and \raisebox{-.24cm}{\includegraphics[width=.7cm]{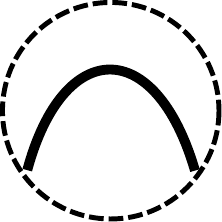}}'s in a Morse decomposition of the diagram. Unfortunately there is no reason that a Reshetikhin-Turaev-type assignment of morphisms to knotoid diagrams should be invariant under this action.

Fortunately, in light of Section \ref{sec:topology} we are equipped to solve this second problem. Namely, restricting our scope to producing invariants of \textbf{biframed} knotoids removes the obstruction entirely. A posteriori this is one of the main reasons for being interested in biframed knotoids in the first place!

Naturally, the first obstruction admits several solutions. Intuitively one would think to extend $\varphi$ to knotoid diagrams by assigning to \raisebox{-.24cm}{\includegraphics[width=.7cm]{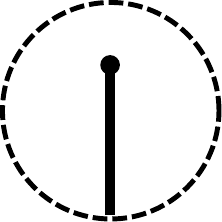}} and \raisebox{-.24cm}{\includegraphics[width=.7cm]{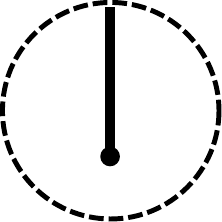}} morphisms $k\to V$ and $V\to k$ respectively. However, by naturality of $\Psi$ and $\Psi^{-1}$, $\varphi$ would then be invariant under the forbidden moves in Figure \ref{fig:forbiddenmoves} causing the associated invariant to be trivial.

Here we consider two options of how to handle \raisebox{-.24cm}{\includegraphics[width=.7cm]{start.pdf}} and \raisebox{-.24cm}{\includegraphics[width=.7cm]{finish.pdf}}.

\subsubsection{Option 1}

The next best thing to representing \raisebox{-.24cm}{\includegraphics[width=.7cm]{start.pdf}} and \raisebox{-.24cm}{\includegraphics[width=.7cm]{finish.pdf}} by morphisms $k\to V$ or $V\to k$, is to represent them by linear maps that are \textit{not} morphisms.

\begin{definition}\label{def:planarquantuminv}
Suppose $\mathcal{C}={}_H\mathcal{M}_F$ for some ribbon Hopf algebra\footnote{By Tannaka duality this is equivalent to $\mathcal{C}$ having a fiber functor \cite[~Ch.~9]{majidfoundations}.} $H$. Let $V$ be an $n$-dimensional object of $\mathcal{C}$, and $K$ a biframed knotoid diagram. For simplicity we assume $K$ has integer coframing. View $K$ as a tangle decorated with two endpoints representing maps $\eta:k\to V$ and $\epsilon: V\to k$. Extend $\varphi$ to an assignment on $K$, by choosing a basis $\{e_1,\dots,e_n\}$ of $V$ and defining
\[
    \eta : 1\mapsto \sum_{i=1}^n c_i e_i
    \qquad\text{ and }\qquad
    \epsilon : \sum_{i=1}^n \lambda_i e_i \mapsto \sum_{i=1}^n \lambda_i d_i,
\]
where $\{c_i,d_i\}_{i=1}^n$ consists of elements of $k$. The maps $\eta$ and $\epsilon$ are chosen to be linear maps, but \textit{not} $H$-module isomorphisms. The Reshetikhin-Turaev invariant associated to $K$ is then $\varphi(K)$.
\end{definition}

This solution is closely related to the approach taken in \cite{gugumcu2021quantum}. It has the advantage of being a simple and natural extension of $\varphi$. A distinct disadvantage is that this construction has no reason a priori to be invariant under the coframing identities, nor under the spherical move of Remark \ref{rk:spherical}. Thus in general this approach yields invariants of planar `triframed' knotoid diagrams, meaning that both endpoints have separate coframings which are not related by a coframing identity.

If we allow the end-points of these triframed knotoids to move around on $\mathbb{R}^2$, keeping the tangent vectors at the leg and head fixed, then we end up with the `Morse knotoids' from \cite{gugumcu2021quantum}. It turns out that the construction of quantum invariant given here also yields invariants of Morse knotoids, and that these Morse knotoids are essentially biframed planar knotoids in the sense that they can be described by knotoid type, framing integer, and `rotation number'. The rotation number is equivalent to the coframing as defined for diagrams in Definition \ref{def:diagcoframing}. See \cite{gugumcu2021quantum} for more details. To keep the discussion below self-contained, we will work with triframed knotoid diagrams.


\begin{example}
We consider the example of $H=U_q(\mathfrak{sl}_2)$, letting $V$ be the standard 2-dimensional representation. In this case the structures of $\Psi,\text{ev},\text{coev}$, etc. are well-known (up to a choice of normalization); see e.g.~\cite[~Ch.~2.6]{chmutov2012introduction} or \cite[~Ch.~9.4]{jackson}.
Let $\{e_1,e_2\}$ be the standard basis on $V$, $\{e^1,e^2\}$ the dual basis on $V^*$, and $\{e_1\otimes e_1,e_1\otimes e_2,e_2\otimes e_1,e_2\otimes e_2\}$ the standard ordered basis of $V\otimes V$ (and similarly for $V^*\otimes V$, $V\otimes V^*$). Then the values of $\varphi$ on knot diagram elements with respect to these bases are given in Figure \ref{fig:sl_2_elements}.

\begin{figure}[ht]
    \centering
    \includegraphics[width=.7\linewidth]{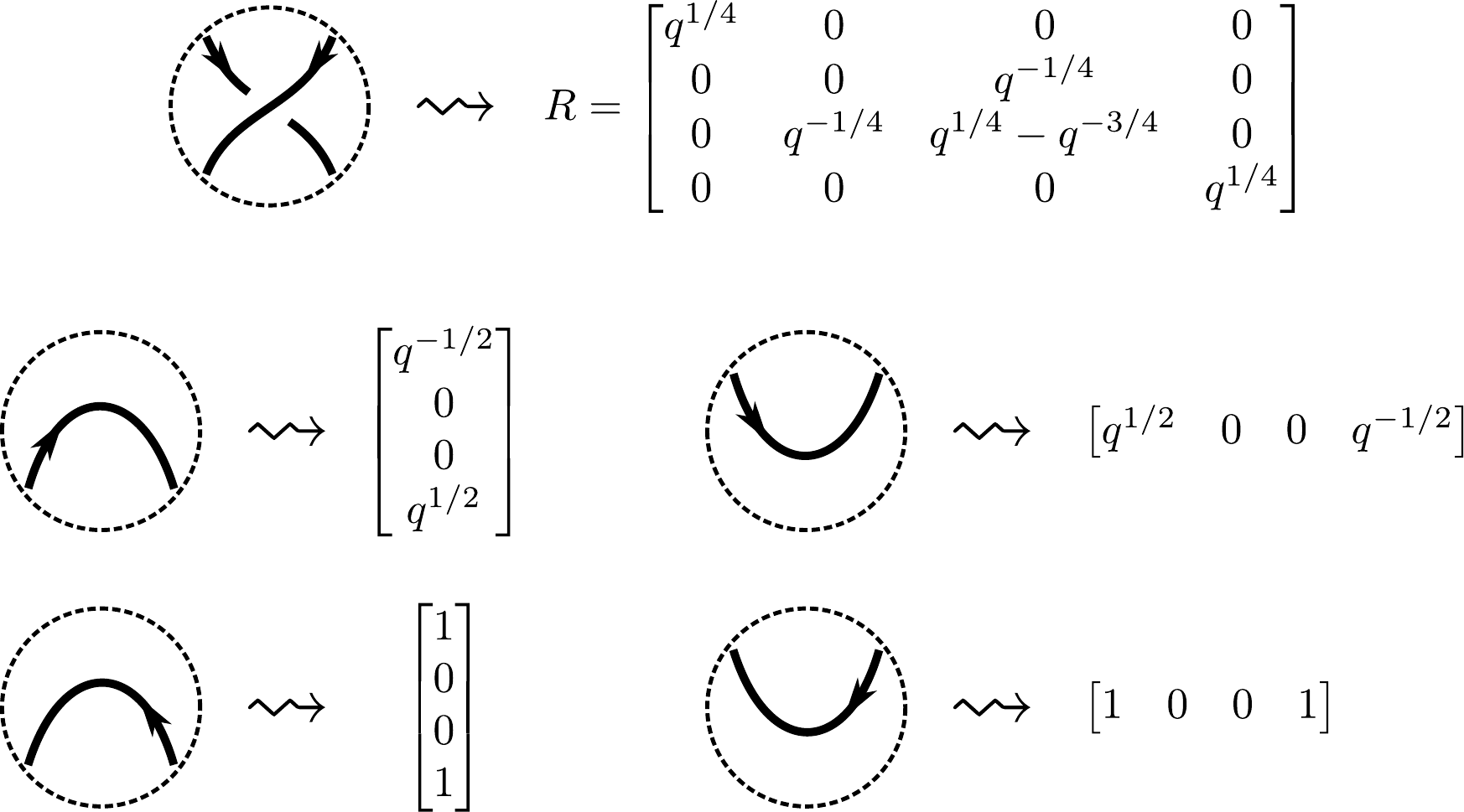}
    \caption{Values of $\varphi$ on knot diagram elements.}
    \label{fig:sl_2_elements}
\end{figure}

From now on, assume for simplicity that we are working with a (triframed) knotoid diagram $K$ with integer coframing, drawn so that the tangent vectors at the end-points of $K$ are \textit{vertical}.

We extend $\varphi$ to triframed knotoid diagrams by associating undetermined linear maps to the endpoints; see Figure \ref{fig:quantum_endpoints}.

\begin{figure}[ht]
    \centering
    \includegraphics[width=.6\linewidth]{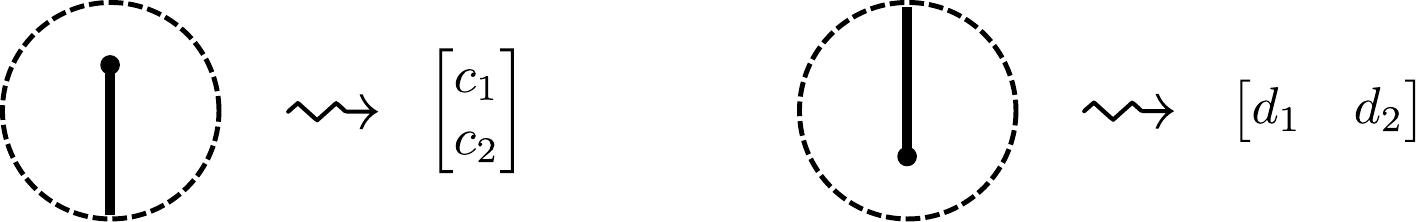}
    \caption{Values of $\varphi$ on remaining knotoid diagram elements.}
    \label{fig:quantum_endpoints}
\end{figure}

We can now carry out computations analogously as one does for knot diagrams, e.g.~computing that
\[
    q^{1/4} R - q^{-1/4} R^{-1} = (q^{1/2}-q^{-1/2})I_4,
\]
where $I_4$ is the identity matrix of dimension $4$. We conclude the following:

\begin{proposition}
Write $Q_{\mathfrak{sl}_2}$ to denote the assignment $\varphi$ for our choice of $(\mathcal{C},V)$. Then this assignment constitutes the existence of a class of triframed knotoid invariants, satisfying the properties of Figures \ref{fig:quantum_endpoints} and \ref{fig:sl_2_triframed_invariant}.

\begin{figure}[ht]
    \centering
    \includegraphics[width=.9\linewidth]{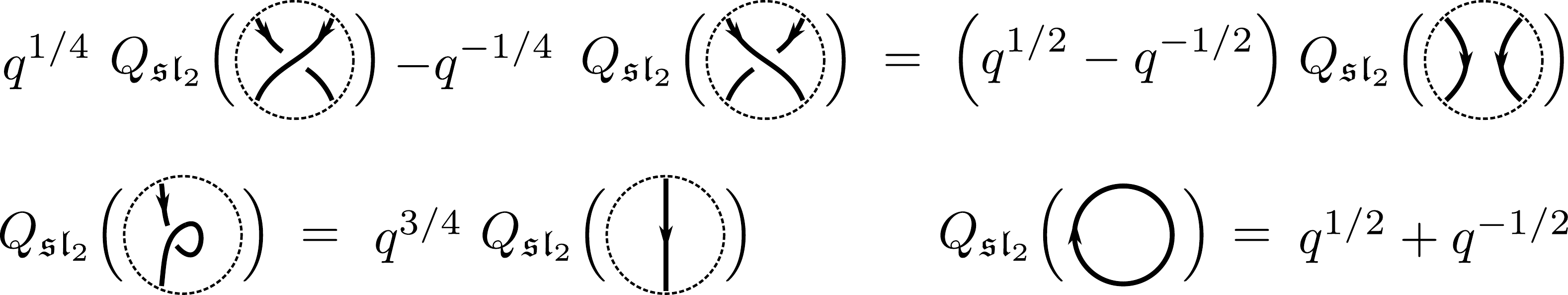}
    \caption{Skein relation and other properties of $Q_{\mathfrak{sl}_2}$.}
    \label{fig:sl_2_triframed_invariant}
\end{figure}

This class is indexed by the four free parameters $c_1,c_2,d_1,d_2$.
\end{proposition}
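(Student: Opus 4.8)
The plan is to reduce the statement to the well-definedness of $\varphi$ on framed tangles, which is already available to us, and then to isolate exactly those diagrammatic moves on triframed knotoid diagrams that $\varphi$ must respect. Concretely, I would first observe that a triframed knotoid diagram $K$ with vertical tangent vectors at its (fixed) end-points can be cut by a generic height function into a vertical composite of elementary tangles together with the two end-point maps $\eta$ and $\epsilon$; the bulk is then the image under $\varphi$ of a framed $(1,1)$-tangle, a morphism $V\to V$ in $\mathcal{C}$, and pre-/post-composing with the linear maps $\eta,\epsilon$ produces a scalar $\varphi(K)\in k$. The content of the proposition is that this scalar depends only on the triframed-knotoid class of $K$.

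The key step is checking invariance under the allowed moves. Since $R1',R2,R3$ and all planar isotopies fixing the end-point tangents take place in the bulk, each such move is literally a framed-tangle move applied to the bulk portion of the Morse decomposition; because $\varphi$ is a framed-tangle invariant, and because pre-/post-composition with the \emph{fixed} linear maps $\eta,\epsilon$ preserves any equality of bulk morphisms, invariance is immediate. In particular the snake/zig-zag straightenings and cup/cap slides that witness independence of the Morse decomposition are instances of the rigid ribbon structure on $\mathcal{C}$ and involve only the genuine morphisms $\text{ev},\text{coev}$, never $\eta$ or $\epsilon$. The crucial point is that the only moves that would force $\eta,\epsilon$ to be \emph{module} maps---namely sliding an end-point past a crossing (the forbidden moves of Figure \ref{fig:forbiddenmoves}) or swiveling the tangent at an end-point (which alters the separate coframings)---are precisely the moves excluded from the triframed equivalence, as are the spherical move $R4$ and the coframing identities. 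Thus the failure of $\eta,\epsilon$ to be morphisms is not an obstruction but exactly what keeps the invariant from collapsing.

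Having established well-definedness, I would verify the explicit relations of Figures \ref{fig:quantum_endpoints} and \ref{fig:sl_2_triframed_invariant} by direct computation with the matrices of Figure \ref{fig:sl_2_elements}: the skein relation $q^{1/4}R-q^{-1/4}R^{-1}=(q^{1/2}-q^{-1/2})I_4$ follows from the Cayley--Hamilton-type (quadratic minimal polynomial) identity satisfied by the standard $U_q(\mathfrak{sl}_2)$ $R$-matrix on $V\otimes V$, and the remaining end-point properties are read off from the chosen bases together with the defining formulas for $\eta$ and $\epsilon$. Finally, the parametrization by $(c_1,c_2,d_1,d_2)\in k^4$ is immediate: these four scalars are exactly the defining data of $\eta$ and $\epsilon$ in the chosen basis, and each choice yields a valid invariant by the argument above, so the construction produces a four-parameter family.

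I expect the main obstacle to be a careful bookkeeping argument rather than a deep one: namely, confirming that \emph{every} allowed triframed move restricts to a framed-tangle move on the bulk once the end-point tangent directions are fixed, and correspondingly that the value is genuinely independent of the chosen height function. The subtlety is entirely at the end-points, where one must check that fixing the vertical tangent removes the $\text{ev}$/$\text{coev}$-counting ambiguity flagged in the discussion preceding Definition \ref{def:planarquantuminv}; away from the end-points everything is inherited verbatim from the classical Reshetikhin--Turaev theory.
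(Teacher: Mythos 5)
Your argument is correct and matches the paper's (largely implicit) reasoning: the proposition is obtained exactly as you describe, from well-definedness of $\varphi$ on framed tangles, the observation that every allowed triframed move ($R1',R2,R3$ and isotopies fixing the end-point tangents) takes place in the bulk away from the fixed linear maps $\eta,\epsilon$, and a direct matrix computation of the quadratic skein relation, with $(c_1,c_2,d_1,d_2)$ as the free parameters. The one small imprecision is your claim that Morse-decomposition independence never involves $\eta,\epsilon$ --- height interchanges of the end-point maps with horizontally distant elementary tangles do occur, but these hold for arbitrary linear maps by bifunctoriality of $\otimes$ on $\texttt{Vect}_k$, so the argument goes through unchanged.
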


Our next question is whether we can pick $c_1,c_2,d_1,d_2$ so that $Q_{\mathfrak{sl}_2}$ is invariant under the coframing identities. To this end we calculate the value of $Q_{\mathfrak{sl}_2}$ on coframing loops around either end-points. See Figure \ref{fig:sl_2_coframing}.

\begin{figure}[ht]
    \centering
    \includegraphics[width=.85\linewidth]{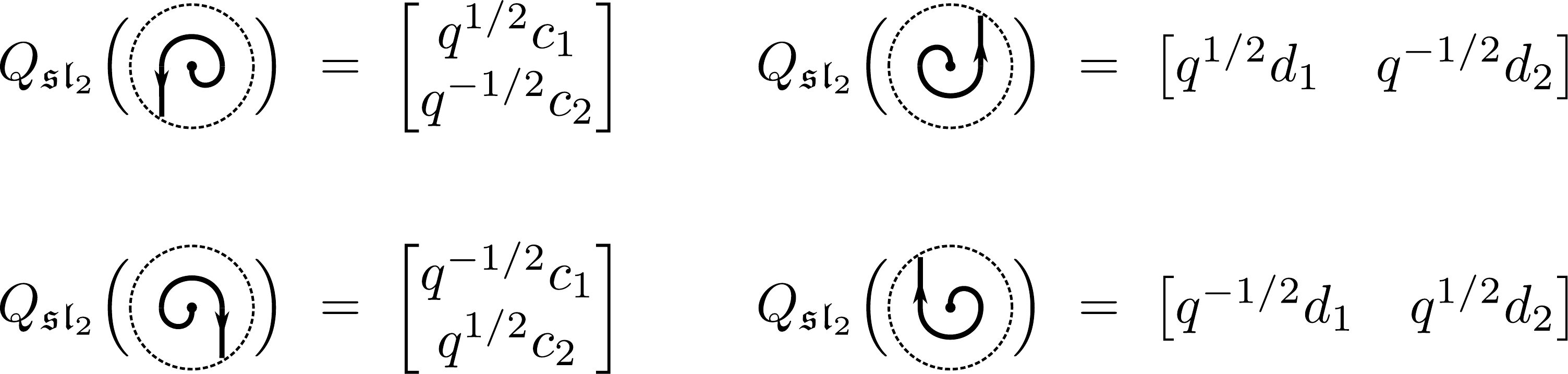}
    \caption{Value of $Q_{\mathfrak{sl}_2}$ on coframing loops.}
    \label{fig:sl_2_coframing}
\end{figure}

If we take $c_1=d_1$ and $c_2=d_2$ then adding a $(+1)$-coframing loop clearly has the same effect when added around either end-point, and similarly for $(-1)$-coframing loops. Thus in this case $Q_{\mathfrak{sl}_2}$ is seen to be invariant under the coframing identities.

In particular taking $c_1=c_2=d_1=d_2=1/\sqrt{2}$ we conclude:
\begin{proposition}
There exists an invariant of \textit{biframed} planar knotoids, denoted $\overline{Q}_{\mathfrak{sl}_2}$, possessing the properties from Figure \ref{fig:sl_2_triframed_invariant} and satisfying
\[
    \overline{Q}_{\mathfrak{sl}_2}\Big(
    \raisebox{-.45cm}{\includegraphics[width=1.1cm]{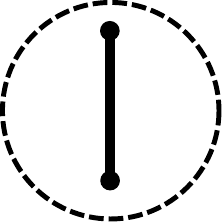}}
    \Big)
    = 1.
\]
\end{proposition}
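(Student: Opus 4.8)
The plan is to specialize the family of triframed invariants $Q_{\mathfrak{sl}_2}$ constructed above to the parameter values $c_1=c_2=d_1=d_2=1/\sqrt{2}$, and then to verify two things: that this specialization is invariant under the coframing identities (so that it genuinely descends to biframed planar knotoids), and that it takes the value $1$ on the trivial knotoid. Since the analytic content has already been assembled before the statement, the proof amounts to choosing the right point in parameter space and checking a normalization.

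First I would observe that the chosen values satisfy $c_1=d_1$ and $c_2=d_2$. By the discussion preceding the proposition---in particular the evaluation of $Q_{\mathfrak{sl}_2}$ on coframing loops recorded in Figure \ref{fig:sl_2_coframing}---this equality guarantees that inserting a $(\pm1)$-coframing loop has the same effect whether the loop is added at the leg or at the head. As the coframing identities precisely relate such a loop at one endpoint to a loop at the other, $Q_{\mathfrak{sl}_2}$ with these parameters is invariant under the coframing identities. It is already invariant under $R1',R2,R3$, being the restriction of the Reshetikhin--Turaev assignment $\varphi$ to framed tangles. Hence the specialized assignment is a well-defined invariant of biframed planar knotoids, which we name $\overline{Q}_{\mathfrak{sl}_2}$. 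The local relations displayed in Figure \ref{fig:sl_2_triframed_invariant} are inherited verbatim from $Q_{\mathfrak{sl}_2}$, since fixing the parameters does not alter the assignments at crossings, caps, or cups.

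It then remains to compute the normalization. The trivial knotoid, drawn with vertical tangent vectors at its endpoints, is the composite $\epsilon\circ\eta\colon k\to V\to k$ with no intervening crossing. Using $\eta(1)=\sum_i c_i e_i$ and $\epsilon\bigl(\sum_i \lambda_i e_i\bigr)=\sum_i \lambda_i d_i$ from Definition \ref{def:planarquantuminv}, this composite equals $\sum_i c_i d_i = c_1 d_1 + c_2 d_2$. Substituting $c_1=c_2=d_1=d_2=1/\sqrt{2}$ gives $\tfrac12+\tfrac12=1$, as claimed.

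I expect no deep obstacle: the substantive work---identifying the constraint $c_1=d_1,\ c_2=d_2$ and evaluating $Q_{\mathfrak{sl}_2}$ on coframing loops---has already been done before the statement. The only points requiring care are bookkeeping ones: confirming that the fully symmetric choice of parameters indeed lies in the subfamily satisfying the coframing constraint, and being precise about the orientation convention so that the trivial knotoid is evaluated as $\epsilon\circ\eta$ rather than some other composite. Note that the value $1/\sqrt{2}$ is forced, up to sign, by combining the symmetric choice $c_1=c_2=d_1=d_2$ with the normalization requirement $c_1 d_1+c_2 d_2=1$; it is a convenient clean witness to the asserted existence rather than the unique possible choice.
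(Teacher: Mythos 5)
Your proposal is correct and follows essentially the same route as the paper: the paper also obtains the proposition by imposing $c_1=d_1$, $c_2=d_2$ to secure invariance under the coframing identities (via the coframing-loop computation of Figure \ref{fig:sl_2_coframing}) and then specializing to $c_1=c_2=d_1=d_2=1/\sqrt{2}$ so that the trivial knotoid evaluates to $c_1d_1+c_2d_2=1$. Your explicit computation of the normalization as $\epsilon\circ\eta$ is just a slightly more spelled-out version of what the paper leaves implicit.
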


One can adjust this invariant using the integer $\text{fr}(K)$ to obtain an $R1$-invariant version of $\overline{Q}_{\mathfrak{sl}_2}$ exactly as one does for knots. For knots this results in the Jones polynomial \cite[~Ch.~9.4]{jackson}. Using the integer $\text{cofr}(K)$ one can similarly adjust $\overline{Q}_{\mathfrak{sl}_2}$ to be coframing-invariant, resulting in an extension of the Jones polynomial to knotoids. Such an extension was already given in \cite{gugumcu2017new}, and these extensions must agree up to normalization since a knotoid invariant is uniquely defined by its Skein relations and its values on unknotted components (given that an invariant with these Skein relation exists, which is the statement of Proposition 3.11). Also note that the Kauffman bracket was already extended to knotoids in \cite{turaev2012knotoids}.

\end{example}

\subsubsection{Option 2}

Our second suggestion for extending $\varphi$ loses the advantage of being particularly simple, but in return has several pleasant features: it is guaranteed to be a biframed spherical knotoid invariant and requires no additional choices after $(\mathcal{C},V)$ are fixed. Moreover it has a strong topological motivation; namely we extend $\varphi$ by not just considering a biframed knotoid, but a planar projection of the entire theta-curve associated to it:

\begin{definition}\label{def:theta_invariant}
Let $B$ be a braided group in a rigid ribbon category $C$. Let $K$ be a biframed knotoid, assuming for simplicity that $K$ has integer coframing. Take a diagram of $K$ such that the tangent vectors of $K$ at the endpoints are vertically downwards. Now replace the leg of $K$ by a splitting $\Delta$ representing the coproduct of $B$, attaching $K$ on the left-hand side, and replace the head by a merging $\cdot$ representing the product of $B$, again attaching $K$ on the left. See figure \ref{fig:splitting_and_merging}.

\begin{figure}[ht]
    \centering
    \includegraphics[width=.85\linewidth]{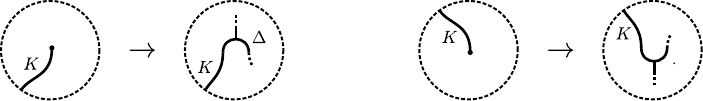}
    \caption{Attaching $\Delta$ and $\cdot$ to the leg and head of $K$, respectively. The dotted lines depict 4 open ends.}
    \label{fig:splitting_and_merging}
\end{figure}

The additions $\Delta,\cdot$ now have 4 open ends: two on the right-hand side and two vertically outgoing ends. Attach the right-hand ends via a line segment going under any strand of $K$ it encounters. Extend the outgoing ends vertically, going over all of $K$. Take the closure of the resulting decorated tangle to obtain a decorated tangle $\overline{K}$. Define $\varphi_\theta(K)$ to be the braided group extension of $\varphi$ evaluated on $\overline{K}$.
\end{definition}

\begin{example}
The construction of $\varphi_\theta(K)$ is depicted for an example knotoid in Figure \ref{fig:true_quantum_knotoid}.
\end{example}

\begin{figure}[ht]
    \centering
    \includegraphics[width=.45\linewidth]{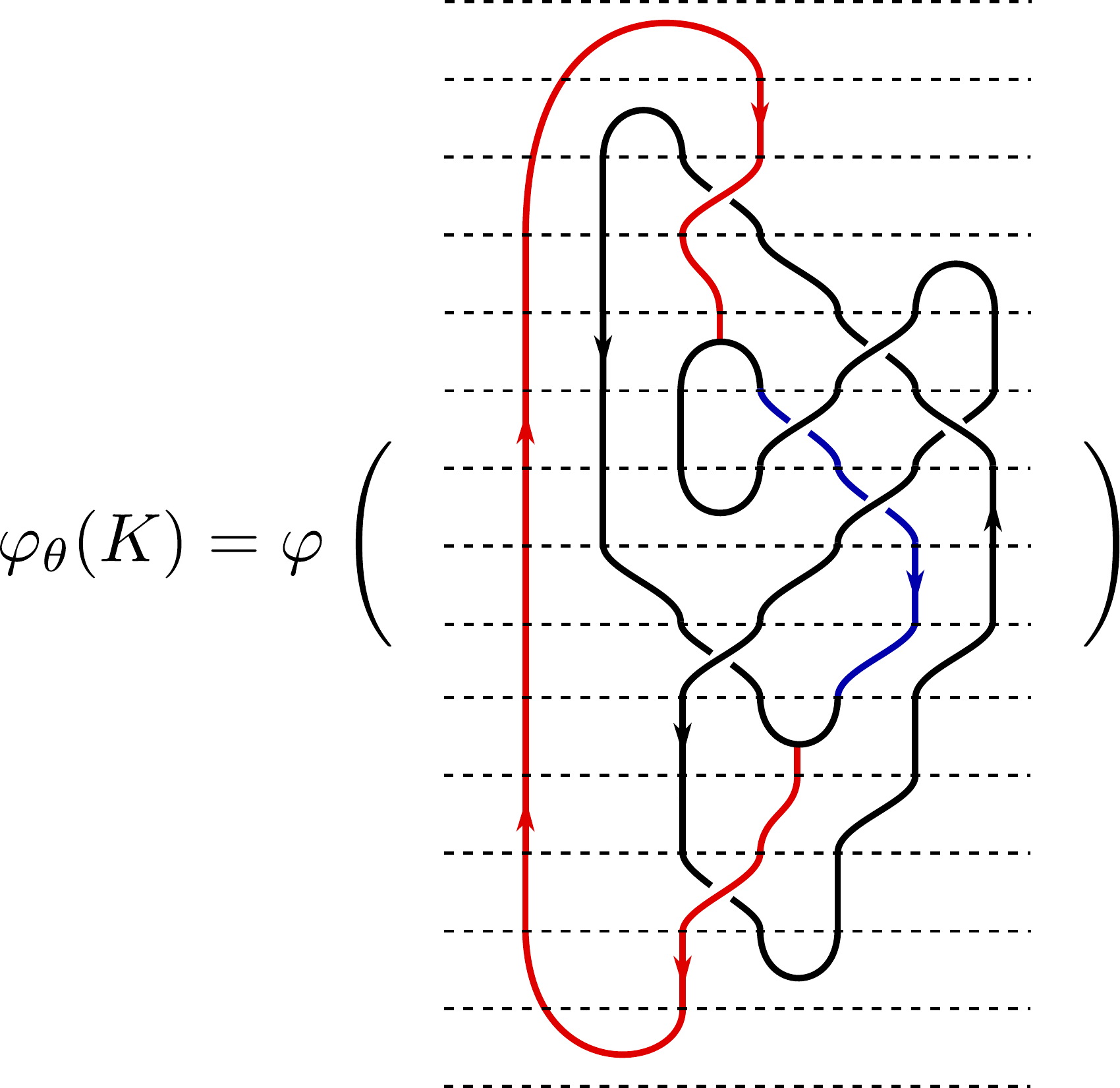}
    \caption{The construction of $\varphi_\theta(K)$. The knotoid $K$ is depicted in black.}
    \label{fig:true_quantum_knotoid}
\end{figure}

\begin{theorem}\label{thm:theta_invariant}
The map $\varphi_\theta(K)\in\Hom_\mathcal{C}(k,k)$ is a biframed knotoid invariant, but it is \textit{not} invariant under the forbidden moves from Figure \ref{fig:forbiddenmoves}.
\end{theorem}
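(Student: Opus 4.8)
The plan is to exploit the topological meaning of the construction: the decorated tangle $\overline{K}$ is nothing but a planar diagram of the biframed theta-curve $\theta$ associated to $K$ under bijection \eqref{eq:biframedrealizaiton}, with the knotoid $e_0$ carrying the braided group $B$ and the ring $\overline{e}_-\cup\overline{e}_+$ tied into the two vertices via the coproduct $\Delta$ and the product $\cdot$. Since the braided-group extension of $\varphi$ is a well-defined invariant of decorated \emph{framed} tangle diagrams — invariant under planar isotopy, $R1',R2,R3$, and the moves that pull decorations past crossings by naturality of $\Psi$ and $\Psi^{-1}$ — it suffices to show that each generating move of biframed knotoid equivalence lifts to such an equivalence of $\overline{K}$. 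I would then conclude that $\varphi_\theta(K)=\varphi(\overline{K})$ depends only on the biframed knotoid class of $K$.

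First I would dispose of the local moves. The knotoid $K$ sits inside $\overline{K}$ as a sub-tangle, and the two closure arcs cross it uniformly: the right-hand segment (playing the role of $e_-$) always under, and the outgoing extension (playing the role of $e_+$) always over. An $R1'$, $R2$, or $R3$ move performed inside $K$ therefore lifts to $\overline{K}$ by dragging these two arcs along; naturality of $\Psi,\Psi^{-1}$ lets one slide each arc past any crossing that is created or destroyed, so $\overline{K}$ changes only by framed-tangle moves and $\varphi$ is unaffected. Ambient isotopies away from $v_0,v_1$ are handled identically.

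The substance of the argument is the two non-local moves. For the spherical move $R4$, the point is that the closure arcs encircle all of $K$ and so play the role of the back of $S^2$: sweeping a strand of $K$ around the back of the sphere is realized in the plane by passing it over $e_+$ and under $e_-$ (equivalently, around the closure), which is an isotopy of the closed diagram $\overline{K}$ and hence preserves $\varphi$. For the coframing identities, a coframing loop at $v_0$ (resp.\ $v_1$) is a winding of $e_0$ around the splitting $\Delta$ (resp.\ the merging $\cdot$), and the identity transferring such a loop from one endpoint to the other is realized by sliding it around the ring $\overline{e}_-\cup\overline{e}_+$ exactly as in the topological move of Figure \ref{fig:coframing_welldefinedness}. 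I expect verifying these two moves — in particular checking that the prescribed over/under closure genuinely encodes the sphere, and that $\Delta,\cdot$ absorb the coframing loops compatibly with the braided-group axioms at the trivalent vertices — to be the main obstacle, since everything else reduces to the framed-tangle invariance of $\varphi$ that we may assume.

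Finally, for non-invariance under the forbidden moves I would argue that such a move forces the knotoid strand $e_0$ to pass through the encircling ring. Because the closure arcs are fixed as ``over all of $K$'' and ``under all of $K$'', pulling an endpoint across them cannot be achieved by naturality alone: it replaces a $\Psi$ by a $\Psi^{-1}$ (or removes a braiding asymmetrically), thereby genuinely changing $\overline{K}$ as a framed tangle. To confirm that the value really changes rather than merely being unprotected, I would exhibit an explicit pair of diagrams related by a forbidden move — for instance using the braided group $B=D(kG)$ of the previous subsection on a small example — and check by direct computation that the two values of $\varphi_\theta$ disagree. This simultaneously shows that $\varphi_\theta$ is a non-trivial knotoid invariant, which is the whole purpose of threading $e_0$ through the ring.
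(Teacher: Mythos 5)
Your overall strategy matches the paper's: the local moves $R1',R2,R3$ and isotopies away from the endpoints are dispatched by the framed-tangle invariance of $\varphi$, the spherical move is handled by sliding strands past the closure arcs (with $R1$-loop cancellation as in Figure \ref{fig:cancellingloops}), and the forbidden moves fail because they genuinely alter a crossing of $\overline{K}$. The one place where you stop short of a proof is exactly the place you flag as the main obstacle: the coframing identities. Saying that a coframing loop ``slides around the ring $\overline{e}_-\cup\overline{e}_+$ as in Figure \ref{fig:coframing_welldefinedness}'' is a restatement of the topological move, not a verification that $\varphi$ respects it; the whole difficulty is that $\overline{K}$ is only a decorated tangle diagram, and a loop of $e_0$ encircling the trivalent vertex $\Delta$ (resp.\ $\cdot$) cannot be pushed past that vertex by naturality of $\Psi$ alone. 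The paper's mechanism is to turn each coframing loop into a \emph{framing} twist on the $\overline{e}_\pm$ strands adjacent to the vertex, by rotating $\Delta$ and $\cdot$ into the dual structure maps $\cdot^*,\Delta^*$ on $B^*$ and invoking the canonical braided group structure on the dual (Majid, \cite[Prop.~4.11]{majid1993beyond}) to justify the diagrammatic moves at the vertices; the twists produced at the leg and at the head are then oppositely oriented and cancel by two applications of $R1'$. Some such argument is indispensable, so your proof is incomplete as written rather than wrong.

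Two smaller points. For $R4$, note the over/under pattern is the reverse of what you wrote: since $\overline{e}_+$ runs over all of $K$ and $\overline{e}_-$ under it, a swept strand of $K$ passes \emph{under} $e_+$ and \emph{over} $e_-$; the paper instead argues directly that strands may be moved over $(\cdot,\Delta)$ and hence over all of $\overline{K}$. For non-invariance, your plan to exhibit an explicit pair of diagrams and compute (say with $B$ built from $D(kG)$) is actually more than the paper does --- it only observes that the two sides of a forbidden move yield tangles differing by a crossing change, so that invariance cannot hold in general --- and carrying out such a computation would be a worthwhile strengthening of that last claim.
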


\begin{proof}
Invariance under $R1',R2,R3$ and isotopies away from the end-points follows from the invariance of $\varphi$ under these moves, so for the first statement it suffices to prove invariance under the spherical move $R4$ and the coframing identities.

Invariance under $R4$ is clear, since strands can be moved over $(\cdot,\Delta)$ and hence all of $\overline{K}$, and oppositely oriented $R1$-loops cancel by Figure \ref{fig:cancellingloops}.

To show invariance under the first coframing identity from Figure \ref{fig:coframing_identities} we do some preliminary computations, depicted in Figures \ref{fig:coframing_inv_prereq2} and \ref{fig:coframing_inv_prereq1}.

\begin{figure}[ht]
    \centering
    \includegraphics[width=\linewidth]{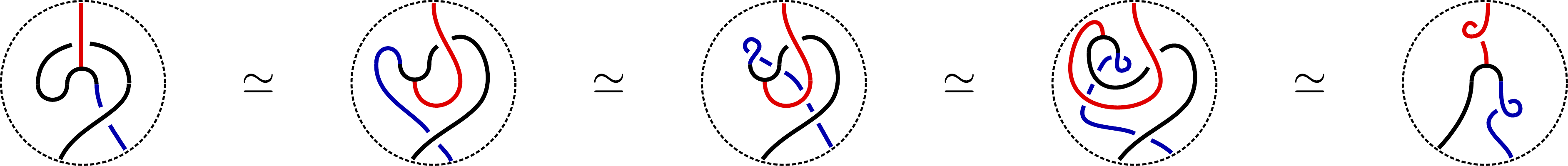}
    \caption{Turning coframing into framing at the leg of $K$.}
    \label{fig:coframing_inv_prereq2}
\end{figure}

\begin{figure}[ht]
    \centering
    \includegraphics[width=\linewidth]{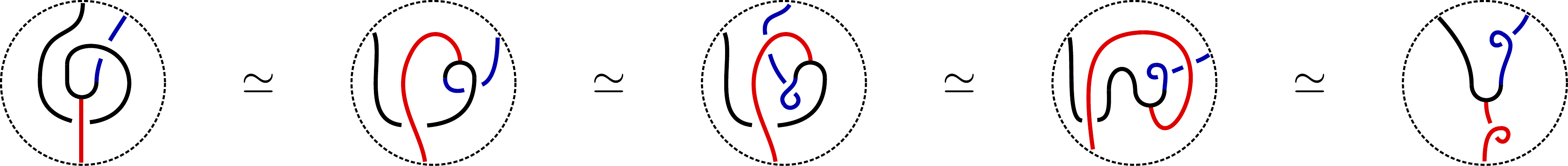}
    \caption{Turning coframing into framing at the head of $K$.}
    \label{fig:coframing_inv_prereq1}
\end{figure}

In Figures \ref{fig:coframing_inv_prereq2} and \ref{fig:coframing_inv_prereq1} we turn $(\cdot,\Delta)$ around to depict morphisms $(\Delta^*,\cdot^*)$ on $B^*$. The morphisms defined as such diagrammatically extend to a canonical braided group structure on $B^*$ for which the diagrammatic moves we have used hold by definition; see \cite[~Prop.~4.11]{majid1993beyond}.

Using these preliminaries, as well as two applications of $R1'$, invariance of $\varphi_\theta(K)$ follows from the sequence of moves depicted in Figure \ref{fig:true_coframing_inv}.

\begin{figure}[ht]
    \centering
    \includegraphics[width=\linewidth]{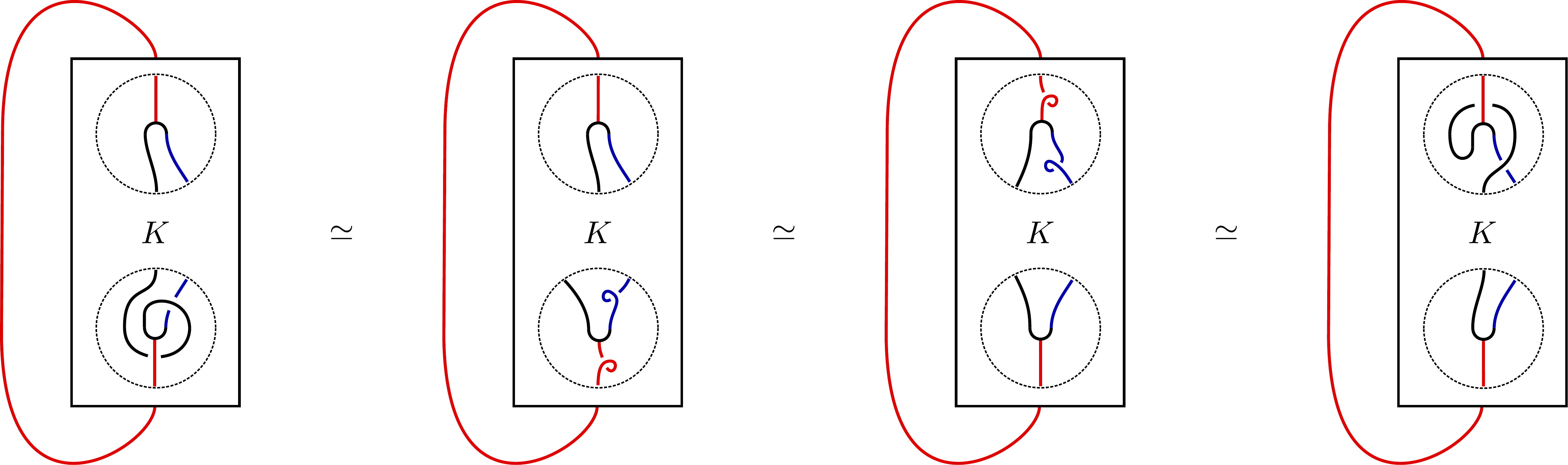}
    \caption{Invariance of $\overline{K}$ under the first coframing identity.}
    \label{fig:true_coframing_inv}
\end{figure}

Invariance under the other coframing identity is shown analogously. For the final statement of the theorem, let $D_1$ and $D_2$ denote the oriented tangle diagrams that $\varphi_\theta$ assigns to both sides of one of the moves in Figure \ref{fig:forbiddenmoves}. Then clearly $D_1$ and $D_2$ differ by at least one crossing change, and so in general $\varphi_\theta(K)$ cannot be invariant under either of these moves.
\end{proof}

\begin{remark}
The proof of Theorem \ref{thm:theta_invariant} shows in particular how $\varphi_\theta(K)$ changes under adjustment of the coframing: namely by adding an $R1$-loop to the strand of $\overline{K}$ representing $\overline{e}_+$, and an oppositely oriented $R1$-loop to the strand representing $\overline{e}_-$.

This remark also motivates why biframed knotoids are a natural object to consider if one wants to extend quantum knot invariants to knotoids or their associated theta-curves. Namely because quantum invariants are only $R1'$-invariant and not $R1$-invariant, one would expect the lines representing $\overline{e}_\pm$ to need to be framed as well. As the proof of Theorem \ref{thm:theta_invariant} shows, it turns out biframed knotoids provide a proper framework for this in the case of spherical knotoids.
\end{remark}

Although we have assumed $K$ has integer coframing in Definition \ref{def:theta_invariant}, the above generalizes readily to half-integer coframing. The only adjustment that needs to be made is that $\Delta$ must be replaced by $\cdot^*$ (or $\cdot$ by $\Delta^*$) at one endpoint. The construction is depicted (on its side) for an example half-integer coframing knotoid in Figure \ref{fig:quantum_halfcoframing}.

\begin{figure}[ht]
    \centering
    \includegraphics[width=.5\linewidth]{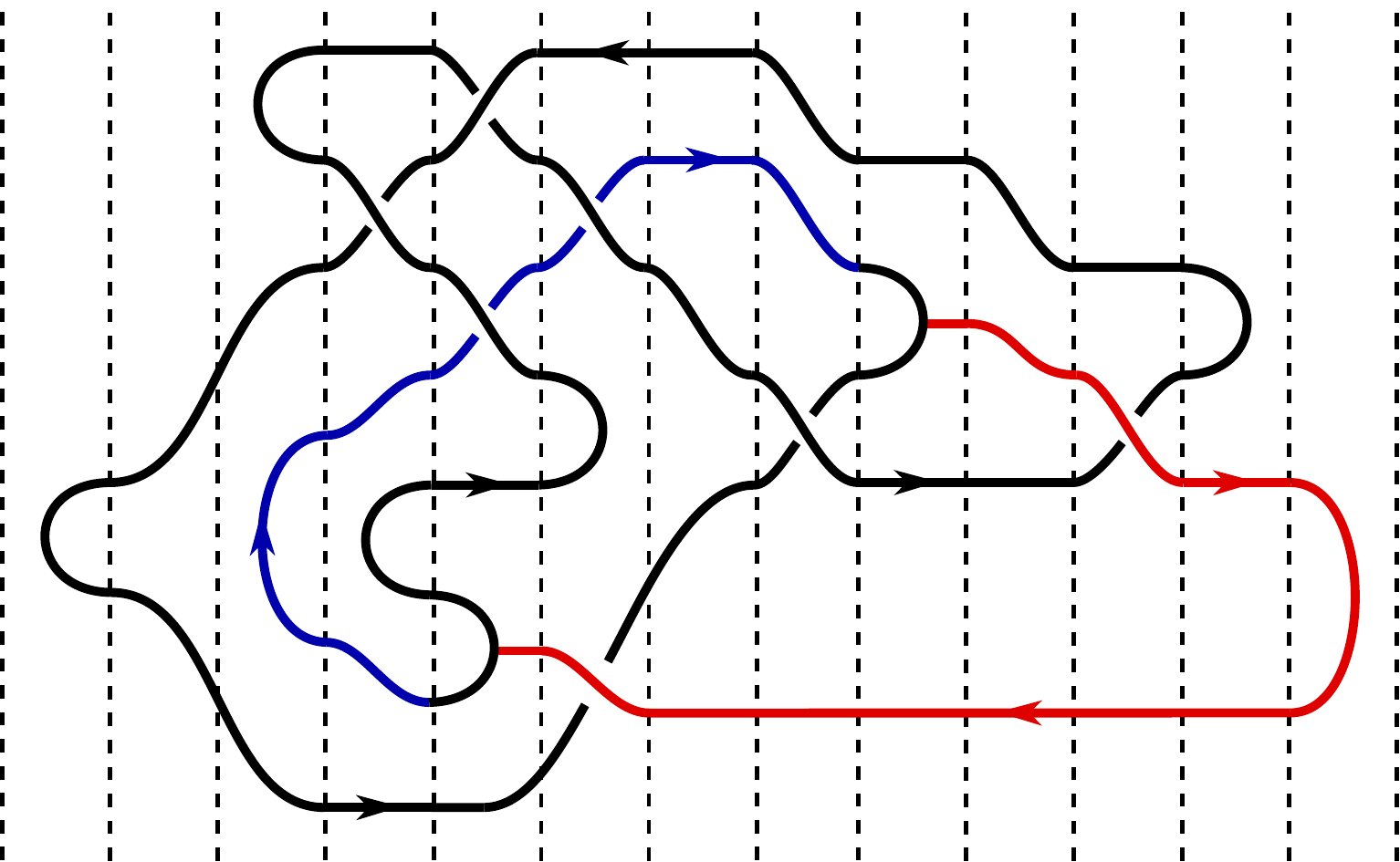}
    \caption{Construction of $\overline{K}$ for a half-integer-coframed knotoid (rotated $90^\circ$ counter-clockwise).}
    \label{fig:quantum_halfcoframing}
\end{figure}

As a closing remark, we briefly discuss the existence of the objects required to construct an invariant of the type suggested in Definition \ref{def:theta_invariant}. To see this existence we must exemplify braided groups in ribbon categories. Taking the ribbon category ${}_H\mathcal{M}_F$ for a Hopf algebra $H$ and noting that braided groups in ${}_H\mathcal{M}_F$ are Hopf algebra objects, it is natural to hope $H$ might be a braided group, or at least a bialgebra object in ${}_H\mathcal{M}_F$, when acting on itself suitably.

In fact if we take the adjoint action $\Ad$ of $H$ on itself, then $\cdot$ is automatically an intertwiner \cite[~Ch.~2]{majidprimer}. If moreover $H$ is cocommutative then $\Delta$ is also easily seen to be an intertwiner, giving an example of a nontrivial object suitable to apply Definition \ref{def:theta_invariant}.\\

\textbf{Acknowledgements}: My gratitude goes out to Agnese Barbensi, Ulrike Tillmann, Daniele Celoria, Jo Ellis-Monaghan, and Roland van der Veen for their advice and guidance.












\end{document}